\let\ker\undefined
\DeclareMathOperator{\ker}{ker}
\renewcommand\div{\operatorname{div}}
\newcommand\curl{\operatorname{curl}}
\newcommand\grad{\operatorname{grad}}
\crefname{hypothesis}{Hypothesis}{Hypotheses}
\title{Helicity-preserving finite element discretization for magnetic relaxation
\thanks{Submitted to the editors DATE.
\funding{This work was funded by the Engineering and Physical Sciences Research Council
[grant numbers EP/R029423/1 and EP/W026163/1], the EPSRC Energy Programme [grant number
EP/W006839/1], a CASE award from the UK Atomic Energy Authority, the Donatio Universitatis
Carolinae Chair ``Mathematical modelling of multicomponent systems'', the European Research Council (ERC Starting Grant, project 101164551 GeoFEM), and by a Royal Society University Research Fellowship (URF$\backslash$R1$\backslash$221398).} 
This work used the ARCHER2 UK National Supercomputing Service.
For the purpose of open access, the authors have applied a CC BY public copyright licence to any author accepted manuscript (AAM) arising from this submission. No new data were generated or analysed during this work.}
}
\author{Mingdong He\thanks{Mathematical Institute, University of Oxford, UK
  (\email{mingdong.he@maths.ox.ac.uk}).}
\and Patrick E.~Farrell\thanks{Mathematical Institute, University of Oxford, UK
     and
     Mathematical Institute, Faculty of Mathematics and Physics, Charles University, Czechia
  (\email{patrick.farrell@maths.ox.ac.uk}).}
\and Kaibo Hu\thanks{School of Mathematical Sciences, University of Edinburgh, UK
  (\email{kaibo.hu@ed.ac.uk}).}
\and Boris D.~Andrews\thanks{Mathematical Institute, University of Oxford, UK
  (\email{boris.andrews@maths.ox.ac.uk}).}
}
\newcommand{\bdainline}[1]{\todo[color=orange!30,inline]{{\bf BDA:} #1}}
\newcommand{\dx}{\,\mathrm{d}x}
\begin{document}

\maketitle

% REQUIRED
\begin{abstract}
The Parker conjecture, which explores whether magnetic fields in perfectly conducting plasmas can develop tangential discontinuities during magnetic relaxation, remains an open question in astrophysics.
Helicity conservation provides a topological barrier during relaxation, preventing topologically nontrivial initial data relaxing to trivial solutions;
preserving this mechanism discretely over long time periods is therefore crucial for numerical simulation.
This work presents an energy- and helicity-preserving finite element discretization for the magneto-frictional system for investigating the Parker conjecture. The algorithm preserves a discrete version of the topological barrier and a discrete Arnold inequality. We also propose extensions of the notion of helicity and the Arnold inequality to certain kinds of topologically nontrivial domains. Numerical experiments demonstrate that helicity preservation is crucial in obtaining physically meaningful simulations of magnetic relaxation, providing an example where structure-preserving schemes are necessary.
\begin{comment}
Beyond the magneto-frictional system itself, our work highlights why structure-preserving schemes are necessary. In many problems, they offer huge quantitative improvements, but standard discretizations still provide reasonable results. Whereas in this work, the improvement is qualitative—without helicity preservation, the solutions collapse to unphysical trivial states. 
\end{comment}
\end{abstract}

% REQUIRED
\begin{keywords}
magnetohydrodynamics,
Parker conjecture,
structure-preserving,
finite element method,
magnetic helicity.
magnetic relaxation.
\end{keywords}

% REQUIRED
\begin{MSCcodes}
65N30, 65L60, 76W05
\end{MSCcodes}

\section{Introduction}
In 1972, Eugene N.~Parker made a conjecture about the behaviour of magnetically ideal plasmas~\cite{parkerTopologicalDissipationSmallScale1972}.
Although several different versions and statements have been discussed in the literature \cite{pontinParkerProblemExistence2020}, the essential claim of the Parker conjecture is as follows:
\emph{For almost all possible flows, the magnetic field develops tangential discontinuities (current sheets) during ideal magnetic relaxation to a force-free equilibrium.} This is also known as the force-free version of the Parker conjecture, and it has many important consequences in solar physics, including explaining the mechanism of coronal heating.
For a comprehensive literature review, see \cite{pontinParkerProblemExistence2020}.

We focus in this work on the case energy is dissipated by fluid viscosity (the fluid Reynolds number $R_{e}<\infty$), while the magnetic part is ideal (the magnetic Reynolds number $R_{m}=\infty$).
%\bdainline{I've removed this paragraph. I don't know if it contributes anything? It adds some notation, but it doesn't seem like it's ever used again?}
%\bdainline{This jump between paragraphs is also a little jarring to me, maybe? We state the Parker conjecture, then start talking about invariants, and it's not obvious how the two are related. Maybe something saying, like, ``we are interested in numerical investigations into the Parker conjecture [...] this necessitates schemes that preserve the long-term behaviour of magnetic relaxation [...] bla bla bla''.}

For general MHD systems, standard energy estimate indicate that the total energy of the fluid and the magnetic field is non-increasing.
Over a bounded, contractible, Lipschitz domain $\Omega \subset \mathbb{R}^3$, the (magnetic) helicity is defined
\begin{equation}\label{eq:helicity}
    \mathcal{H} \coloneqq \int_{\Omega} \bm A \cdot \bm B \dx.
\end{equation}
Here $\bm A$ is any potential for the magnetic field $\bm B$ satisfying $\bm B = \curl \bm A$ and $\bm A \times \bm n = 0$ on the boundary $\partial \Omega$, where $\bm n$ is the (outward-facing) unit normal.
In the magnetically ideal limit, this is an invariant under relaxation:
\begin{equation}\label{eqn:helicity-preserved}
    \frac{d}{dt}\mathcal{H} = 0.
\end{equation}
The helicity provides a lower bound for the magnetic energy
\begin{equation}\label{eqn:arnold-inequality}
    |\mathcal{H}|\leq C^{-1}\int_{\Omega} \bm B\cdot \bm B \dx,
\end{equation}
for some constant $C > 0$.
This is known as the Arnold inequality \cite{arnold1974asymptotic}; for completeness, we include a proof in \Cref{sec:proof-arnold-ineq}.
A significant consequence of the structure \eqref{eqn:arnold-inequality} is that, despite dissipation in the energy, initial data with nonzero $\mathcal{H}$ cannot relax to zero, as the total energy cannot decay below a certain multiple of $|\mathcal{H}|$.
As helicity quantifies the knottedness of divergence-free fields \cite{ArnoldTopologicalMethodsHydrodynamics2021,moffatt1981some}, this reflects a topological barrier exhibited by knotted magnetic fields.
This property is crucial for the mathematical and physical behaviour of magnetic relaxation.

In numerical computation, however, the magnetic helicity $\mathcal{H}$ is typically not conserved (or even potentially ill-defined if the discrete magnetic field is not divergence-free).
Thus, the energy of the numerical magnetic field may decay to zero, leading to nonphysical solutions.
Enforcing a discrete analogue of this topological barrier is therefore crucial for meaningful long-term simulations.
A further challenge for the numerical investigation of the Parker conjecture is the resolution of tangential discontinuities in $\bm B$ \cite{pontinParkerProblemExistence2020}.
Numerical representations of the magnetic field must allow such discontinuities.

In this paper we consider the magneto-frictional system, a simplified version of the full time-dependent MHD equations \cite{Chodura3DcodeMHD1981,yeates2022limitations},
\begin{subequations}\label{eqn:magneto-frictional-equations}
\begin{align}
    \label{eqn:magnetic-advection}\partial_t \bm{B} + \curl\bm{E}&=\bm{0},\\
    \label{eqn:electric-field}\bm{E}+ \bm{u}\times\bm{B} &= \bm{0}, \\
    \bm{j}& = \curl\bm{B}, \\
    \label{eqn:velocity}\bm{u} &= \tau \bm{j}\times\bm{B},
\end{align}
\end{subequations}
with initial data $\bm{B}|_{t=0} = \bm{B}_0$ satisfying the magnetic Gauss law $\div\bm B_0 = 0$;
as a consequence of the magnetic advection equation \eqref{eqn:magnetic-advection}, the magnetic Gauss law holds for $\bm B$ at all times $t > 0$.
The parameter $\tau > 0$ is a coupling parameter.

Instead of coupling the velocity field $\bm{u}$ via the Navier--Stokes equations, the magneto-frictional equations are closed by assuming a special form of $\bm{u}$ \eqref{eqn:velocity}.
This guarantees the magnetic energy $\mathcal{E}(t) \coloneqq \int |\bm B|^2\dx$ is non-increasing, with rate
\begin{equation}\label{eqn:energy-equality}
    \partial_t \mathcal{E} = - 2\tau \int_{\Omega}|\bm j\times \bm B|^2 \dx,
\end{equation}
thus avoiding the exchange of kinetic energy and magnetic energy permitted in the original MHD system.
The system furthermore retains the conservation of magnetic helicity $\mathcal{H}$, preventing the decay to zero during relaxation via the Arnold inequality \cref{eqn:arnold-inequality} for initial data with non-zero $\mathcal{H}$.
At a stationary state ($\partial_t {\bm B}=0$), the dissipation of energy \cref{eqn:energy-equality} implies the Lorentz force $\bm j\times \bm{B}$ must vanish.
Should such a solution exist, it is referred to as a nonlinear force-free field or Beltrami field\cite{pontinParkerProblemExistence2020}, coinciding with an equilibrium of the full MHD system when $\bm u$ is given by the coupling with the Navier--Stokes equations.
% It also solves the full MHD equation, where $\bm u$ is given by the coupling with the Navier--Stokes equations.
The existence of tangential discontinuities of the stationary solutions of \cref{eqn:magneto-frictional-equations} is therefore equivalent to the {force-free version of} the Parker conjecture \cite[Def.~2]{pontinParkerProblemExistence2020}. 
%\emph{For almost all possible boundary flows, the magnetic field develops tangential discontinuities during relaxation to a force-free equilibrium.}
We consider the following boundary conditions
\begin{subequations}
\begin{equation}
    \bm{B}\cdot\bm{n} = 0,  \quad \bm j \times \bm n = \bm{0}, \quad \text{on } \partial \Omega.
\end{equation}
These imply
\begin{equation}
   \bm{u}\cdot\bm{n} = 0, \quad \bm{E} \times \bm{n} = \bm{0}, \quad \text{on } \partial\Omega.
\end{equation}
\end{subequations}

The analysis of the magneto-frictional system \eqref{eqn:magneto-frictional-equations} is of broad interest. Whether the solution $\bm B$ is able to develop finite-time singularities remains open; for recent progress in this direction, see \cite{brenierTopologyPreservingDiffusionDivergenceFree2014,beekie2022moffatt,enciso2025obstructions}.

The numerical challenges described above (i.e.~both conserving the magnetic helicity, and resolving tangential discontinuities) are pivotal when considering the magneto-frictional system \cite{pontinParkerProblemExistence2020}. 
There is a rich literature on numerical investigation of magnetic relaxation.
Most existing Eulerian methods introduce artificial magnetic reconnection;
the topological structure is therefore lost, leading to non-physical solutions \cite{zweibel1987formation}.
%\bda{I mean, we have some magnetic reconnection, no? We don't preserve the topology exactly. I don't know if this is a fair comparison.}

Regarding Lagrangian discretizations, Craig \& Sneyd \cite{craig1986dynamic} developed a finite-difference relaxation method to preserve the topology of the magnetic fields;
this has been used extensively to study the Parker conjecture \cite{longbottom1998magnetic, craig2005parker, wilmot2009magneticquasi, wilmot2009magneticparallel, craig2014current}.
A mimetic Lagrangian {method} was proposed in \cite{candelaresiMimeticMethodsLagrangian2014a} to ensure charge conservation.
A variational integrator for ideal MHD has been developed by \cite{zhou2014variational}, which is both symplectic and momentum preserving;
this method has been applied to examine the singularity of the nonlinear force-free field \cite{zhou2016formation, zhou2017constructing}.
However, Lagrangian methods suffer from strong mesh distortions, preventing one from obtaining more conclusive results.

%\bdainline{Why does mesh distortion make the results any less ``conclusive''?}
In this paper, we design a structure-preserving Eulerian finite-element discretization for the magneto-frictional system.
Our method relies on the finite element exterior calculus (FEEC) \cite{ArnoldFiniteElementExterior2018,arnoldFiniteElementExterior2006,huHelicityconservativeFiniteElement2021} and a recently proposed framework for enforcing conservation laws and dissipation inequalities \cite{andrews2024enforcing}.
We believe this to be the first helicity-preserving Eulerian finite element method for this problem, which is highly desirable in computational solar physics.

The fields in our scheme are discretized via finite element de~Rham complexes.
In particular, the magnetic field $\bm B$ is discretized with the Raviart--Thomas \cite{raviart2006mixed} or Brezzi--Douglas--Marini \cite{brezzi1985two} finite element spaces, which (crucially) allow tangential discontinuities.
The method preserves both the dissipation of energy and conservation of helicity, thus ensuring the topological barrier provided by the Arnold inequality \eqref{eqn:arnold-inequality} is inherited on the discrete level (see \eqref{eqn:discrete-arnold-inequality} below) preventing some artificial magnetic reconnection.
The method is valid for unstructured meshes on general domains, avoids mesh distortions, and can be of arbitrary order in both space and time.

The Parker conjecture is often studied in settings with periodic boundary conditions (topologically equivalent to a torus) yielding a domain with nontrivial topology.
However, the magnetic potential $\bm A$, present in the definition of the helicity \eqref{eq:helicity}, is ill-defined in the presence of such topological irregularities.
Through a careful handling of certain harmonic forms, we further propose a conserved generalized helicity that is well-defined over certain topologically nontrivial domains;
the generalized helicity similarly demonstrates a (generalized) Arnold inequality \eqref{eqn:arnold-inequality}.
Over such domains, our discretisation again preserves this generalized helicity, thus preserving the topological barrier.

Structure-preserving (or compatible) discretizations have been intensively studied in several contexts, including in geometric numerical integration \cite{hairer2006geometric} and FEEC \cite{arnoldFiniteElementExterior2010}.
Various examples have demonstrated the crucial role of preserving certain structures.
For instance, long-term simulations for the Kepler problem with non-structure-preserving integrators may lead to accumulated errors in the orbit \cite[Fig.~2.2]{hairer2006geometric}.
For Maxwell source and eigenvalue problems, finite element methods that do not respect the cohomological structures may lead to spurious solutions \cite{arnoldFiniteElementExterior2010}.

%\bdainline{I don't know if we need any of the above 4 (commented out) sentences. Having just a few tangentially related examples makes structure preservation seem far \emph{less} iportant than it is, in my opinion. I've commented them out (for now) and propose the following instead:}

De~Rham complex--based methods have been widely discussed for constructing structure-preserving discretizations in MHD \cite{huHelicityconservativeFiniteElement2021,huStableFiniteElement2017,huStructurepreservingFiniteElement2018,LaakmannStructurepreservinghelicityconservingfinite2023,MaRobustpreconditionersincompressible2016,gawlikFiniteElementMethod2022}.
Most, however, are developed primarily from analytic perspectives. The fundamental question of the qualitative importance of helicity-preservation is less often addressed, with numerical examples of their physical significance remaining rare. A key contribution of this work is in demonstrating that, to obtain meaningful numerical solutions for the magneto-frictional equations \eqref{eqn:magneto-frictional-equations}, the discrete conservation of helicity is crucial.

The remainder of this paper proceeds as follows.
In \Cref{sec:Preliminaries}, we introduce certain preliminaries.
In \Cref{sec:Structure-preserving discretization}, we propose our structure-preserving scheme;
through a generalized notion of helicity, we extend the Arnold inequality to domains with nontrivial topology.
In \Cref{sec:Numerical-experiment}, we present numerical results, and compare our proposed method to other discretizations to explore the physical significance of helicity preservation.
In \Cref{sec:Conclusion}, we conclude with a general outlook on the numerical simulation of the Parker conjecture.

\section{Notation and preliminaries}\label{sec:Preliminaries}
Let $\Omega\subset \mathbb{R}^3$ be a bounded, Lipschitz domain;
if not otherwise specified, we assume that $\Omega$ is contractible.
Again, let $\bm n$ denote the outward-pointing unit normal vector on $\partial\Omega$.
We use $\|\cdot\|$ and $(\cdot, \cdot)$ to denote the $L^2(\Omega)$ norm and inner product respectively, allowing $L^2$ to denote both the scalar- and vector-valued spaces.

Define the following Hilbert spaces:
\begin{subequations}
\begin{align}
    H^1       &\coloneqq  \{v\in L^2(\Omega) : \nabla v \in L^2(\Omega)\},  \\
    H(\curl)  &\coloneqq  \{\bm v\in L^2(\Omega) : \curl\bm v \in L^2(\Omega)\},  \\
    H(\div)   &\coloneqq  \{\bm v\in L^2(\Omega) : \div\bm v \in L^2(\Omega)\}.
\end{align}
\end{subequations}
Restricting on the boundary $\partial\Omega$, we define the following subspaces:
\begin{subequations}
\begin{align}
    H^1_0       &\coloneqq  \{v\in H^1 : v = 0 \text{ on } \partial\Omega\},  \\
    H_0(\curl)  &\coloneqq  \{\bm v\in H(\curl) : \bm v\times\bm n = \bm 0 \text{ on } \partial\Omega\},  \\
    H_0(\div)  &\coloneqq  \{\bm v\in H(\div) : \bm v\cdot\bm n = 0 \text{ on } \partial\Omega\}.
\end{align}
\end{subequations}
Moreover, we define
\begin{equation}
    L^2_0(\Omega)  \coloneqq  \{u\in L^2(\Omega) : \int_{\Omega}u=0\}.
\end{equation}
The 3D de~Rham complex with homogeneous boundary conditions (BCs) reads:
\begin{subequations}
\begin{equation}
    \begin{tikzcd}
    0\arrow{r}&H_0^1 \arrow[r, "\grad"] &
    H_0(\curl)\arrow[r,"\curl"] &
    H_0(\div) \arrow[r,"\div"] &
    L_0^2 \arrow{r} & 0.
    \end{tikzcd}
    \label{eq:deRhamcomplex}
\end{equation}
This complex is exact on contractible domains.
We will use finite-element subcomplexes of \eqref{eq:deRhamcomplex} for discretization;
several such subcomplexes are well-known, consisting of N\'ed\'elec \cite{nedelec1-0}, Raviart--Thomas\cite{raviart2006mixed}, and Brezzi--Douglas--Marini elements \cite{brezzi1985two}, each extending to arbitrary spatial order.
Adopting the notation of \cite{ArnoldFiniteElementExterior2018} we denote such a subcomplex by
\begin{equation}
    \begin{tikzcd}
0\arrow{r}& H_{0}^{1,h}\arrow[r, "\grad"] &
    H_0^h(\curl)\arrow[r,"\curl"] &
    H_0^h(\div)\arrow[r,"\div"] &
    L_0^{2,h} \arrow{r} & 0.
    \end{tikzcd}
    \label{eq:subcomplex}
\end{equation}
\end{subequations}
We require that \eqref{eq:subcomplex} is exact on contractible domains.

\section{Structure-preserving discretization}\label{sec:Structure-preserving discretization}

\begin{comment}
\bdainline{Following Patrick's comments, I've written out an idea below about how I might briefly motivate the formulation of the scheme/why we introduce the variables we do. Hopefully this exposition also clarifies the questions about the test and solution space mismatch.}
\end{comment}
\begin{comment}
For the magnetic helicity to be well-defined on the discrete level, our scheme must preserve the magnetic Gauss law ($\div \bm B=0$) at least up to solver tolerances and machine precision;
we therefore discretize the magnetic field $\bm B_h$ in the discrete $H_0^h(\div)$ space \eqref{eq:subcomplex}.
Since $\partial_t\bm B_h$ and $\curl \bm E_h$ are connected in \eqref{eqn:magnetic-advection}, we choose to discretize $\bm E_h$ in the $H_0^h(\curl)$ space, as in \cite{huStableFiniteElement2017}.
To preserve the energy dissipation and helicity conservation structures, the framework \cite{andrews2024enforcing} introduces auxiliary variables representing the current ($\bm j_h = \mathbb{Q}_c\curl\bm B_h$) and magnetic field ($\bm H_h = \mathbb{Q}_c\bm B_h$) respectively.
\end{comment}
\begin{comment}
\bdainline{This footnote is pretty long. If we want to go into less detail, we could just say something like ``\emph{The choice of spaces and variables to achieve helicity- and energy-preservation can be systematically derived from the framework recently proposed by Andrews \& Farrell \cite{AndrewsHighorderconservativeaccurately2024}''.}}
\end{comment}

For the magnetic helicity to be well-defined on the discrete level, our scheme must preserve the magnetic Gauss law ($\div \bm B=0$) at least up to solver tolerances and machine precision;
we therefore discretize the magnetic field $\bm B_h$ in the $H_0^h(\div)$ space  \eqref{eq:subcomplex}.
Since $\curl H_0^h(\curl)\subset H_0^h(\div)$, the advection equation \eqref{eqn:magnetic-advection} then suggests we discretize $\bm E_h$ in the $H_0^h(\curl)$ space.
As shown in \Cref{thm:sp-properties} below, this is sufficient to ensure $\partial_t\bm B_h + \curl \bm E_h = \bm 0$ exactly, additionally guaranteeing that if $\div \bm B_0 = 0$ then the magnetic Gauss law holds pointwise on $\bm B_h$ for every $t$.

To replicate the right physics, we must preserve both the helicity conservation \eqref{eqn:helicity-preserved} and energy dissipation \eqref{eqn:energy-equality} laws.
The general idea proposed in \cite{andrews2024enforcing} for designing numerical schemes that replicate conservation and dissipation properties is to rewrite these laws as time integrals involving certain associated test functions;
discrete approximations of the associated test functions are then introduced as auxiliary variables in the scheme.
In this context, the two auxiliary variables introduced for helicity conservation and energy dissipation are an auxiliary current $\bm j_h$ and magnetic field $\bm H_h$, $L^2$ projections of $\curl \bm B_h$ and $\bm B_h$ onto the space $H_0^h(\curl)$ respectively.\footnote{We may equivalently write $\bm j_h = \curl_h \bm B_h$, where $\curl_h$ is the $L^2$ adjoint operator of $\curl$ in the discrete complex \eqref{eq:subcomplex}.}
These introduced auxiliary variables resemble those proposed for similar MHD systems in \cite{huHelicityconservativeFiniteElement2021,LaakmannStructurepreservinghelicityconservingfinite2023} with extension to higher order in time.

We therefore propose the following semi-discrete structure-preserving discretization of the magneto-frictional equations \eqref{eqn:magneto-frictional-equations}.
\begin{comment}
\footnote{
    To derive \eqref{eq:structure-preserving-scheme} using the framework of \cite{AndrewsHighorderconservativeaccurately2024} it is convenient to first reparametrize the system in the magnetic potential $\bm A$ and electric potential $\phi$,
    \begin{equation}\label{eqn:magneto-frictional-equations-compact}
        \dot{\bm A} + \grad\phi  =  \tau\curl[(\curl^2\bm A \times \curl\bm A) \times \curl\bm A],  \quad
        \div \bm A  =  0.
    \end{equation}
    The divergence-free condition and $\phi$ may both be eliminated in a weak formulation of \eqref{eqn:magneto-frictional-equations-compact} by solving and testing (in $L^2$) over a discretely divergence-free finite element space (as done in the Navier--Stokes example in \cite[Chap.~2]{AndrewsHighorderconservativeaccurately2024}). Seeking $\bm A \in H_0^h(\curl)$ and $\phi \in H_0^{1,h}$ then allows the use of the subcomplex \eqref{eq:subcomplex} to reparametrize in $\bm B = \curl \bm A \in H_0^h(\div)$ and $\bm E = - \dot{\bm A} - \grad\phi \in H_0^h(\curl)$.
}.
\end{comment}
\begin{problem}[Structure-preserving semi-discrete scheme]
    Given $\bm B_0 \in H_0^h(\div)$, find $\bm B_h \in C^1(\mathbb{R}_+; H_0^h(\div))$ satisfying the initial conditions (ICs) $\bm B|_{t=0} = \bm B_0$ and $(\bm E_h, \bm j_h, \bm H_h) \in C^0(\mathbb{R}_+; H_0^h(\curl))^3$ such that
    \begin{subequations}\label{eq:structure-preserving-scheme}
    \begin{align}
        (\partial_t {\bm{B}}_h, \bm{C}_h) + (\curl\bm{E}_h, \bm{C}_h) &= 0,  \\
        (\bm{E}_h, \bm{F}_h) + \tau ((\bm{j}_h \times \bm{H}_h) \times \bm{H}_h, \bm{F}_h) &= 0,  \\
        (\bm{j}_h, \bm{K}_h) &= (\bm{B}_h, \curl\bm{K}_h),  \\
        (\bm{H}_h, \bm{D}_h) &= (\bm{B}_h, \bm{D}_h),
    \end{align}
    \end{subequations}
    at all times $t \in \mathbb{R}_+$ and for all $(\bm{C}_h, \bm{F}_h, \bm{K}_h, \bm{D}_h) \in H_0^h(\div) \times H_0^h(\curl)^3$.
\end{problem}

% \pftodo{Where in this problem is the initial value of the timestep enforced? This is a Bubnov--Galerkin scheme (test and trial spaces the same), but I think it should be a Petrov-Galerkin scheme (test and trial spaces distinct). In particular I think that $\bm{C}_h$ should live in $\mathbb{P}_{S-1}(T_n;H_0^h(\div))$ and the initial value should be enforced in the space for $\bm{B}_h$.}

% \pftodo{I'm not sure I like this presentation, where the scheme parachutes in from the sky, unmotivated. Should we motivate its design a little first, before stating it?}

For the time discretization of \eqref{eq:structure-preserving-scheme} we use a Gauss collocation method~\cite[Sec.~II.1.3]{hairer2006geometric}, coinciding with the implicit midpoint method at lowest order. We are thus able to achieve arbitrary order in both time and space.

\subsection{Structure-preserving properties on contractible domains}
Let us first consider contractible $\Omega$, i.e.~domains with trivial topology.

We summarize some properties of our semi-discretization \eqref{eq:structure-preserving-scheme} in the following theorem.
As each of the results therein represents a linear or quadratic structure, they are preserved under the Gauss collocation time discretization~\cite[Sec.~IV.2.1 Theorem 2.1]{hairer2006geometric}.
\begin{theorem}[Structure-preserving properties of the discretisation]\label{thm:sp-properties}
    Any solution to \eqref{eq:structure-preserving-scheme} satisfies:
    \begin{subequations}\label{eq:sp-properties}
    \begin{enumerate}
        \item magnetic advection equation,
        \begin{equation}\label{eq:magnetic-advection-discrete}
            \partial_t\bm B_h + \curl\bm E_h = 0,
        \end{equation}
        \item magnetic Gauss law,
        \begin{equation}\label{eq:gauss}
            \div \bm B_h = \div \bm B_0 = 0.
        \end{equation}
    \end{enumerate}
    Since the magnetic Gauss law \eqref{eq:gauss} holds exactly, and the subcomplex \eqref{eq:subcomplex} is exact on contractible domains, there exists a magnetic potential $\bm A_h \in H_0^h(\curl)$ satisfying $\curl\bm A_h = \bm B_h$.
    With the discrete energy $\mathcal{E}_h \coloneqq \|\bm B_h\|^2$ and helicity $\mathcal{H}_h \coloneqq(\bm A_h, \bm B_h)$, we have the following:
    \begin{enumerate}
        \item[3.] energy dissipation,
        \begin{equation}\label{eq:energy-dissipation}
            \mathcal{E}_h - \mathcal{E}_h|_{t=0}  =  -2 \tau\int_0^t \|\bm{j}_h\times \bm{H}_h\|^2  \le  0,
        \end{equation}
        \item[4.] helicity conservation,
        \begin{equation}\label{eq:helicity-conservation}
            \mathcal{H}_h - \mathcal{H}_h|_{t=0}  =  0.
        \end{equation}
    \end{enumerate}
    With $\mathcal{H}_h$ well-defined, we further have:
    \begin{enumerate}
        \item[5.] the discrete Arnold inequality,
    \begin{equation}\label{eqn:discrete-arnold-inequality}
                C|\mathcal{H}_h| \le \mathcal{E}_h.
        \end{equation}
    \end{enumerate}
    \end{subequations}
\end{theorem}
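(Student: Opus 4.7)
The plan is to tackle the five claims in sequence, exploiting the inclusion $\curl H_0^h(\curl) \subset H_0^h(\div)$ encoded in \eqref{eq:subcomplex} together with a vector triple product identity. Since $\bm E_h \in H_0^h(\curl)$ implies $\curl \bm E_h \in H_0^h(\div)$, the residual $\partial_t \bm B_h + \curl \bm E_h$ lies in the test space of the first equation of \eqref{eq:structure-preserving-scheme}; testing against itself forces the strong identity \eqref{eq:magnetic-advection-discrete}. Taking the divergence and using $\div\circ\curl=0$ yields $\partial_t \div \bm B_h = 0$, which combined with the divergence-free initial data gives the strong Gauss law \eqref{eq:gauss}. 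Exactness of \eqref{eq:subcomplex} on the contractible domain then produces a potential $\bm A_h \in H_0^h(\curl)$ with $\curl \bm A_h = \bm B_h$.

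For the energy balance I would test the first equation with $\bm C_h = \bm B_h$, obtaining $\frac{1}{2}\frac{d}{dt}\|\bm B_h\|^2 + (\curl \bm E_h, \bm B_h) = 0$, then convert the curl pairing using the discrete adjoint relation (the third equation of \eqref{eq:structure-preserving-scheme} at $\bm K_h = \bm E_h$) to get $(\bm B_h, \curl \bm E_h) = (\bm j_h, \bm E_h)$. Testing the constitutive equation for $\bm E_h$ at $\bm F_h = \bm j_h$ and applying the identity $((\bm a \times \bm b) \times \bm b) \cdot \bm a = -|\bm a \times \bm b|^2$ gives $(\bm j_h, \bm E_h) = \tau\|\bm j_h \times \bm H_h\|^2$; integration in time then produces \eqref{eq:energy-dissipation}.

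Helicity conservation is the most delicate step. Differentiating $\mathcal{H}_h = (\bm A_h, \bm B_h)$, the term $(\bm A_h, \partial_t \bm B_h)$ becomes $-(\bm A_h, \curl \bm E_h) = -(\bm B_h, \bm E_h)$ after integration by parts, the tangential boundary term vanishing since $\bm A_h \times \bm n = \bm 0$. For the other term, applying $\curl$ to \eqref{eq:magnetic-advection-discrete} and invoking exactness forces $\partial_t \bm A_h + \bm E_h \in \grad H_0^{1,h}$; pairing any such gradient with $\bm B_h$ vanishes by the strong Gauss law and the normal boundary condition, so $(\partial_t \bm A_h, \bm B_h) = -(\bm E_h, \bm B_h)$. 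Hence $\frac{d}{dt}\mathcal{H}_h = -2(\bm E_h, \bm B_h)$. Because $\bm E_h \in H_0^h(\curl)$, the $L^2$-projection relation defining $\bm H_h$ gives $(\bm E_h, \bm B_h) = (\bm E_h, \bm H_h)$, and testing the constitutive equation for $\bm E_h$ at $\bm F_h = \bm H_h$ kills the nonlinear term via $\bm H_h \times \bm H_h = \bm 0$, closing the argument.

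Finally, \eqref{eqn:discrete-arnold-inequality} is immediate: $\bm B_h$ is a divergence-free element of $H_0(\div)$ with vector potential $\bm A_h \in H_0(\curl)$, so the continuous Arnold inequality from \Cref{sec:proof-arnold-ineq} applies verbatim to $\bm B_h$. The main obstacle is the helicity step, which simultaneously requires handling the gauge ambiguity in $\bm A_h$, the $L^2$-projection structure encoded in $\bm H_h$, and the specific placement of $\bm H_h$ in both factors of the nonlinearity; substituting $\bm B_h$ for either copy of $\bm H_h$ would destroy the cancellation $\bm H_h \times \bm H_h = \bm 0$ and break conservation.
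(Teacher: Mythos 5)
Your proposal is correct and follows essentially the same route as the paper: testing the first equation with the residual to obtain the strong advection equation, using $\div\circ\curl=0$ plus the divergence-free initial data for the Gauss law, the test choices $\bm K_h=\bm E_h$ and $\bm F_h=\bm j_h$ for the energy law, the choices $\bm D_h=\bm E_h$ and $\bm F_h=\bm H_h$ with the scalar-triple-product cancellation for helicity, and inheriting the Arnold inequality verbatim from the continuous case. The only (harmless) deviation is bookkeeping in the helicity step: you handle the $(\partial_t\bm A_h,\bm B_h)$ term by writing $\partial_t\bm A_h+\bm E_h=\grad\varphi_h$ via discrete exactness and the Gauss law (mirroring the paper's continuous generalized-helicity argument), whereas the paper collapses both product-rule terms into $2(\partial_t\bm B_h,\bm A_h)$ by integration by parts.
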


\begin{proof}
    We see the magnetic advection equation \eqref{eq:magnetic-advection-discrete} by considering $\bm C_h = \partial_t\bm B_h + \curl\bm E_h (\in H_0^h(\div))$ (see e.g.~\cite{huStableFiniteElement2017}).
    The magnetic Gauss law \eqref{eq:gauss} is an immediate consequence, as $\div\circ\curl = 0$.
    For the semi-discrete energy dissipation law \eqref{eq:energy-dissipation},
    \begin{subequations}
    \begin{alignat}{3}
        \mathcal{E}_h - \mathcal{E}_h|_{t=0} &= 2 \int_0^t (\partial_t\bm{B}_h, \bm{B}_h) = - 2 \int_0^t (\curl \bm E_h, \bm B_h)  \\
        &= - 2 \int_0^t (\bm E_h, \curl \bm B_h) = - 2 \int_0^t (\bm E_h, \bm j_h)  \\
        &= - 2\tau \int_0^t \|\bm j_h \times \bm H_h\|^2 \le 0,
    \end{alignat}
    \end{subequations}
    where the second equality holds by the magnetic advection equation \eqref{eq:magnetic-advection-discrete}, and the fourth and last hold by considering $\bm K_h = \bm E_h (\in H_0^h(\curl))$ and $\bm F_h = \bm j_h (\in H_0^h(\curl))$ respectively.
    {The boundary term arising in integration by parts in the second equality vanishes due to the boundary condition $\bm E_h\times \bm n=\bm 0$.}
    For the semi-discrete helicity conservation law \eqref{eq:helicity-conservation},
    \begin{subequations}
    \begin{align}
        \mathcal{H}_h - \mathcal{H}_h|_{t=0} &= 2 \int_0^t (\partial_t\bm{B}_h, \bm{A}_h) = - 2 \int_0^t (\curl \bm E_h, \bm A_h) \\
        &= - 2 \int_0^t (\bm E_h, \bm B_h) = - 2 \int_0^t (\bm E_h, \bm H_h) \\
        &= 2\tau\int_0^t((\bm j_h\times \bm H_h)\times \bm H_h, \bm H_h) = 0
    \end{align}
    \end{subequations}
    where again the second equality holds by the magnetic advection equation \eqref{eq:magnetic-advection-discrete}, and the fourth and sixth hold by considering $\bm D_h = \bm E_h (\in H_0^h(\curl))$ and $\bm F_h = \bm H_h (\in H_0^h(\curl))$ respectively.
    The proof of the discrete Arnold inequality \eqref{eqn:discrete-arnold-inequality} is identical to that of the continuous case \cite{arnold1974asymptotic}.
\end{proof}

\begin{corollary}[Boundedness of the discrete energy]
    The discrete energy remains in a bounded interval:
    \begin{equation}
        C|\mathcal{H}_h| \le \mathcal{E}_h \le \mathcal{E}_h|_{t=0}.
    \end{equation}
\end{corollary}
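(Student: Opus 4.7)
The plan is to assemble the corollary directly from the three key structural properties established in \Cref{thm:sp-properties}, with essentially no additional work required. The left inequality is simply a restatement of the discrete Arnold inequality \eqref{eqn:discrete-arnold-inequality}, which holds pointwise in time for any solution of \eqref{eq:structure-preserving-scheme}. The right inequality is extracted from the energy dissipation identity \eqref{eq:energy-dissipation} by observing that the integrand $\|\bm j_h\times \bm H_h\|^2$ is pointwise nonnegative, so the integral is nonnegative and hence $\mathcal{E}_h - \mathcal{E}_h|_{t=0}\le 0$.

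The only mildly subtle point to mention is that the quantity $|\mathcal{H}_h|$ appearing on the left-hand side could in principle vary in $t$, but by the helicity conservation law \eqref{eq:helicity-conservation} we have $\mathcal{H}_h = \mathcal{H}_h|_{t=0}$ for all $t$, so this lower bound is a constant (depending only on the initial data). In particular this makes the two-sided bound genuinely a confinement statement: the energy trajectory $\mathcal{E}_h(t)$ is trapped in the fixed interval $[C|\mathcal{H}_h|_{t=0}|,\ \mathcal{E}_h|_{t=0}]$ for all time, which is the real physical content of the corollary and precisely the discrete topological barrier preventing relaxation to the trivial state when $\mathcal{H}_h|_{t=0}\ne 0$.

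There is no real obstacle, and no delicate estimate to perform; the whole argument is a two-line concatenation of prior items in \Cref{thm:sp-properties}. I would simply write
\begin{equation}
    C|\mathcal{H}_h| \;\stackrel{\eqref{eqn:discrete-arnold-inequality}}{\le}\; \mathcal{E}_h \;\stackrel{\eqref{eq:energy-dissipation}}{\le}\; \mathcal{E}_h|_{t=0},
\end{equation}
and conclude, optionally noting the use of \eqref{eq:helicity-conservation} to identify $|\mathcal{H}_h|$ with $|\mathcal{H}_h|_{t=0}|$ if one wishes to emphasize that the lower bound is fixed in time.
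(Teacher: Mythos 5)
Your proposal is correct and matches the paper's (implicit) argument exactly: the corollary is the immediate concatenation of the discrete Arnold inequality \eqref{eqn:discrete-arnold-inequality} and the energy dissipation law \eqref{eq:energy-dissipation}, with helicity conservation \eqref{eq:helicity-conservation} fixing the lower bound in time. No further comment is needed.
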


%\bdainline{This is nice, but I don't really get what it provides...?}
\subsection{Generalized helicity and structure-preserving properties on non-contractible domains}
Aspects of the topology of the domain $\Omega$ are encoded in the de~Rham complex \eqref{eq:deRhamcomplex}.
Over contractible, or topologically trivial, domains (Betti numbers $(\beta_0, \beta_1, \beta_2, \beta_3) = (1, 0, 0, 0)$), for example, the de~Rham complex is exact.
Over domains with a single pair of periodic boundary conditions\footnote{That is, domains that are topologically equivalent to solid tori.} (Betti numbers $(\beta_0, \beta_1, \beta_2, \beta_3) = (1, 1, 0, 0)$), the de~Rham complex is no longer exact at $H_0(\div)$;
namely, there exists over such domains a (non-zero) constant field $\bm B_H \in H_0(\div)$, for which there does not exist a potential $\bm A_H \in H_0(\curl)$ satisfying $\curl\bm A_H = \bm B_H$.\footnote{In fact, since $\div \bm B_H = 0$ and $\curl \bm B_H = \bm 0$, this field $\bm B_H$ is a harmonic 2-form.}
The Parker conjecture is often studied over such domains (see \cite[Fig.~1]{pontinParkerProblemExistence2020}).
However, the definition of the helicity $\mathcal{H}$ \eqref{eq:helicity} relies on the exactness of the de~Rham complex at $H_0(\div)$ (Betti number $\beta_1 = 0$) for the existence of a magnetic potential $\bm A \in H_0(\curl)$;
in the presence of such topological irregularities, it is no longer well-defined.

In this section, we no longer assume $\Omega$ to be topologically trivial.
In particular, we shall allow the de~Rham complex \eqref{eq:deRhamcomplex} not to be exact at $H_0(\div)$ (no conditions on Betti number $\beta_1$).
This necessitates the definition of a generalized helicity (see \eqref{eqn:generalized-helicity} below) that accounts for the harmonic forms introduced by the nontrivial topology.
We shall, however, continue to require exactness at $H_0(\curl)$ (Betti number $\beta_2 = 0$), i.e.~for any $\bm A\in H_{0}(\curl)$ satisfying $\curl\bm A = \bm 0$ there exists $\varphi \in H^{1}_{0}$ such that $\bm A = \grad \varphi$;
this is necessary for the gauge invariance of our generalized helicity (see \Cref{thm:gauge} below) and holds on the domain of current interest:
the box with a single pair of periodic boundary conditions.

Let us first consider the continuous case.
By the Hodge decomposition \cite[Sec.~4.2]{ArnoldFiniteElementExterior2018}, we can decompose a divergence-free magnetic field $\bm B \in H_0(\div)$ into 
\begin{equation}\label{Bhodge}
    \bm B = \curl \bm A + \bm B_{H},
\end{equation}
where $\bm A \in H_0(\curl)$, and $\bm B_{H}$ is a harmonic 2-form ($\div \bm B_H = 0$ and $\curl \bm B_H = \bm 0$).
Given $\bm B$, the decomposition \eqref{Bhodge} determines $\curl \bm A$ and $\bm B_H$ uniquely.\footnote{Note that, while $\curl \bm A$ is determined uniquely, $\bm A$ is not.}
% The field $\bm B_H$ necessarily satisfies the $L^{2}$-orthogonality
% \begin{equation}\label{BRBH}
% \int_{\Omega}\bm B_{R}\cdot \bm B_{H}=0.
% \end{equation}

\begin{theorem}[Invariance of the harmonic component]\label{thm:harmonic-form-constant}
    The harmonic component $\bm B_H$ of $\bm B$ \eqref{Bhodge} remains constant in time in the evolution determined by the magneto-frictional equations \eqref{eqn:magneto-frictional-equations}. 
\end{theorem}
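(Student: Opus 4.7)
The plan is to exploit the $L^2$-orthogonality of the Hodge decomposition at $H_0(\div)$. Let $\mathfrak{H}^2 \coloneqq \{\bm H \in H_0(\div) : \div \bm H = 0,\ \curl \bm H = \bm 0\}$ denote the (finite-dimensional) space of harmonic 2-forms. A standard result in the Hodge theory for the de~Rham complex \eqref{eq:deRhamcomplex} is that the subspace $\curl H_0(\curl) \subset H_0(\div)$ is $L^2$-orthogonal to $\mathfrak{H}^2$; this orthogonality follows from integration by parts combined with the boundary condition $\bm A \times \bm n = \bm 0$ satisfied by elements of $H_0(\curl)$, together with $\curl \bm H = \bm 0$ for $\bm H \in \mathfrak{H}^2$. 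Consequently, the decomposition \eqref{Bhodge} identifies $\bm B_H$ as the $L^2$-orthogonal projection $P_{\mathfrak{H}^2} \bm B$ of $\bm B$ onto $\mathfrak{H}^2$.

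Next, I would differentiate this identity in time. Since $\mathfrak{H}^2$ is a fixed (time-independent), closed, finite-dimensional subspace of $L^2(\Omega)$, the projection $P_{\mathfrak{H}^2}$ is a bounded linear operator that commutes with $\partial_t$, so
\begin{equation*}
    \partial_t \bm B_H \;=\; P_{\mathfrak{H}^2}(\partial_t \bm B) \;=\; -P_{\mathfrak{H}^2}(\curl \bm E),
\end{equation*}
using the magnetic advection equation \eqref{eqn:magnetic-advection}. The boundary condition $\bm E \times \bm n = \bm 0$ means $\bm E \in H_0(\curl)$, so $\curl \bm E \in \curl H_0(\curl)$, which is precisely the subspace orthogonal to $\mathfrak{H}^2$. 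Hence $P_{\mathfrak{H}^2}(\curl \bm E) = 0$, giving $\partial_t \bm B_H = 0$, so $\bm B_H$ is constant in time.

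The main substantive point — which is also arguably the only non-routine step — is the orthogonality $\curl H_0(\curl) \perp \mathfrak{H}^2$ in $L^2$. I would verify this directly for the reader: for $\bm A \in H_0(\curl)$ and $\bm H \in \mathfrak{H}^2$, integration by parts gives $(\curl \bm A, \bm H) = (\bm A, \curl \bm H) + \int_{\partial\Omega}(\bm n \times \bm A)\cdot \bm H = 0$, since $\curl \bm H = \bm 0$ and $\bm A \times \bm n = \bm 0$ on $\partial\Omega$. Everything else is simply linearity of the projection and the already-established magnetic advection equation. One could also present the argument without explicit reference to the projection operator by testing $(\partial_t \bm B, \bm H) = -(\curl \bm E, \bm H) = 0$ for an arbitrary $\bm H \in \mathfrak{H}^2$, and invoking the uniqueness of the Hodge decomposition; this is perhaps the cleanest presentation for the paper.
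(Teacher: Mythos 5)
Your proposal is correct and rests on the same mechanism as the paper's proof: substituting the advection equation and using that $\curl H_0(\curl)$ and the harmonic 2-forms are complementary (the paper phrases this as uniqueness of the Hodge decomposition, you phrase it as $L^2$-orthogonality and projection, and your closing remark about testing against an arbitrary harmonic form is essentially the paper's presentation). Your explicit verification of the orthogonality and of the role of the boundary condition $\bm E \times \bm n = \bm 0$ is a fine, slightly more detailed write-up of the same argument.
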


\begin{proof}    
    Substituting the Hodge decomposition for $\bm B$ \eqref{Bhodge} into the magnetic advection equation \eqref{eqn:magnetic-advection},
    \begin{equation}\label{eqn:magnetic-adv}
        \bm 0
        = \partial_t \bm B + \curl \bm E
        = \partial_t [\curl \bm A + \bm B_H] + \curl \bm E
        = \curl [\partial_t \bm A + \bm E] + \partial_t \bm B_H.
    \end{equation}
    By the uniqueness of the Hodge decomposition, both $\curl[\partial_t \bm A + \bm E] = \bm 0$ and $\partial_t \bm B_H = \bm 0$, i.e.~in the latter case the harmonic component $\bm B_H$ is constant.
\end{proof}

With the Hodge decomposition of $\bm B$ \eqref{Bhodge} established, our proposed generalized helicity is defined as follows.

\begin{definition}[Generalized helicity]
    With the Hodge decomposition of $\bm B$ \eqref{Bhodge}, we define a generalized helicity $\tilde{\mathcal{H}}$ by
    \begin{equation}\label{eqn:generalized-helicity}
        \Tilde{\mathcal{H}}
        % \coloneqq \int_{\Omega} \bm{A}\cdot (\bm{B}_R+2\bm{B}_H)
        % = \int_{\Omega} \bm{A}\cdot (\bm{B} + \bm{B}_H).
        = (\bm{A}, \bm{B} + \bm{B}_H) \quad
        (= (\bm{A}, \curl \bm{A} + 2\bm{B}_H)).
    \end{equation}
    % where $\bm A$ is any field satisfying $\curl\bm A=\bm B_{R}$. 
\end{definition}

Dependent on the topology of $\Omega$, we see this definition is gauge-invariant

\begin{theorem}[Gauge invariance]\label{thm:gauge}
    Again assuming the de~Rham complex \eqref{eq:deRhamcomplex} is exact at $H_0(\curl)$ (Betti number $\beta_2 = 0$), the definition of the generalized helicity $\Tilde{\mathcal{H}}$ \eqref{eqn:generalized-helicity} is gauge-invariant.
\end{theorem}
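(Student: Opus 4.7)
The plan is to trace through the finite gauge freedom in $\bm A$ and show that the resulting change in $\tilde{\mathcal{H}}$ vanishes via an integration by parts. Recall that, given $\bm B \in H_0(\div)$, the Hodge decomposition \eqref{Bhodge} determines $\curl\bm A$ and $\bm B_H$ uniquely, but $\bm A \in H_0(\curl)$ itself is only determined up to addition of an element of the kernel of $\curl$ in $H_0(\curl)$. So the first step is to characterise this gauge freedom: if $\bm A, \bm A' \in H_0(\curl)$ both realise the decomposition \eqref{Bhodge} for the same $\bm B$ and $\bm B_H$, then $\bm A_0 \coloneqq \bm A' - \bm A \in H_0(\curl)$ satisfies $\curl \bm A_0 = \bm 0$.

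The second step is where the topological hypothesis is used. By the assumed exactness of \eqref{eq:deRhamcomplex} at $H_0(\curl)$ (i.e.\ $\beta_2 = 0$), there exists $\varphi \in H_0^1$ with $\bm A_0 = \grad \varphi$. In particular, every admissible gauge transformation is of the form $\bm A \mapsto \bm A + \grad \varphi$ for some $\varphi \in H_0^1$, with the homogeneous boundary condition on $\varphi$ inherited from the ambient space $H_0^1$ in the subcomplex.

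The third step is a short computation: the change in the generalized helicity under this transformation is
\begin{equation}
    \tilde{\mathcal{H}}(\bm A') - \tilde{\mathcal{H}}(\bm A) = (\bm A_0, \bm B + \bm B_H) = (\grad \varphi, \bm B + \bm B_H).
\end{equation}
Integrating by parts,
\begin{equation}
    (\grad \varphi, \bm B + \bm B_H) = -(\varphi, \div(\bm B + \bm B_H)) + \langle \varphi, (\bm B + \bm B_H)\cdot \bm n\rangle_{\partial\Omega},
\end{equation}
where the boundary pairing vanishes because $\varphi \in H_0^1$, and the volume term vanishes because $\div \bm B = 0$ (magnetic Gauss law) and $\div \bm B_H = \bm 0$ ($\bm B_H$ harmonic). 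Hence $\tilde{\mathcal{H}}(\bm A') = \tilde{\mathcal{H}}(\bm A)$, establishing gauge invariance.

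There is no real obstacle here; the only subtle point is to make sure the gauge argument uses exactness of the complex at $H_0(\curl)$ rather than at $H_0(\div)$ (the latter is exactly what we have given up by moving to the non-contractible setting), and to note that the boundary condition $\varphi \in H_0^1$ is precisely what is needed to kill the boundary term against $(\bm B + \bm B_H)\cdot \bm n$, which is not assumed to vanish pointwise on $\partial \Omega$ for the harmonic piece on the non-contractible domains of interest.
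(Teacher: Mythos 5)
Your proof is correct and follows essentially the same route as the paper: both identify the gauge freedom with gradients $\grad\varphi$, $\varphi \in H_0^1$, via the exactness of \eqref{eq:deRhamcomplex} at $H_0(\curl)$, and then kill $(\grad\varphi, \bm B + \bm B_H)$ by integration by parts using $\div\bm B = \div\bm B_H = 0$ and the boundary condition $\varphi = 0$. The only cosmetic difference is that you characterise the gauge freedom as the difference of two admissible potentials, whereas the paper reads it off from the Hodge decomposition $\bm A = \grad\varphi + \curl\bm\psi$; the substance is identical.
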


\begin{proof}
    % In contrast to the classical notion of helicity, the generalized helicity $\tilde{\mathcal{H}}_{\bm A}$ may exhibit gauge dependence (i.e.~depend on the specific choice of $\bm A$) depending on the topology of the domain $\Omega$.
    The proof holds similarly to the non-generalized case.
    Consider $\bm A$ through its Hodge decomposition,
    \begin{equation}\label{Ahodge}
        \bm A = \grad \varphi + \curl \bm \psi,
    \end{equation}
    for some $\varphi \in H^1_0$, $\bm \psi \in H_0(\div)$.
    The condition $\bm B = \curl \bm A + \bm B_H = \curl^2 \bm\psi + \bm B_H$ determines $\curl\bm\psi$ uniquely, but imposes no restrictions on $\grad\varphi$ (or equivalently $\varphi$);
    this dictates the choice of gauge on $\bm A$. 
    However, different choices of gauge, i.e.~different choices of $\grad\varphi$, do not affect $\Tilde{\mathcal{H}}$, as $\grad\varphi$ does not contribute to the helicity:
    \begin{equation}
        % (\grad\varphi, \bm{B}_R+2\bm{B}_H) = -(\varphi, \div(\bm{B}_R+2\bm{B}_H)) = 0
        (\grad\varphi, \bm B + \bm B_H) = - (\varphi, \div (\bm B + \bm B_H)) = 0,
    \end{equation}
    {where the boundary term vanishes due to the boundary condition $\varphi=0$ for $\varphi\in H_0^1$.}
\end{proof}

We show now that $\tilde{\mathcal{H}}$ is an invariant under the relaxation process, similar to $\mathcal{H}$.

\begin{theorem}[Invariance of the generalized helicity]\label{thm:conservation-general-helicity}
    The generalized helicity $\tilde{\mathcal{H}}$ is constant in time.
\end{theorem}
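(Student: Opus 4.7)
The plan is to differentiate $\tilde{\mathcal{H}} = (\bm A, \bm B + \bm B_H)$ in time and show all contributions vanish using the structural identities already established in the preceding theorems. Writing
\begin{equation*}
    \frac{d}{dt}\tilde{\mathcal{H}}
    = (\partial_t \bm A, \bm B + \bm B_H) + (\bm A, \partial_t \bm B) + (\bm A, \partial_t \bm B_H),
\end{equation*}
the third term drops out immediately by \Cref{thm:harmonic-form-constant}, which tells us $\partial_t \bm B_H = \bm 0$.

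For the second term, I would substitute the magnetic advection equation \eqref{eqn:magnetic-advection} to get $(\bm A, \partial_t \bm B) = -(\bm A, \curl \bm E)$, and then integrate by parts. The boundary term $\int_{\partial\Omega}(\bm A \times \bm E)\cdot \bm n$ vanishes because $\bm A \in H_0(\curl)$ satisfies $\bm A \times \bm n = \bm 0$. Using the Hodge decomposition \eqref{Bhodge}, this simplifies to $-(\curl \bm A, \bm E) = -(\bm B - \bm B_H, \bm E)$.

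For the first term, I would exploit the identity $\curl[\partial_t \bm A + \bm E] = \bm 0$ derived inside the proof of \Cref{thm:harmonic-form-constant} (equation \eqref{eqn:magnetic-adv}). Since we assume the de~Rham complex \eqref{eq:deRhamcomplex} is exact at $H_0(\curl)$, there exists $\varphi \in H_0^1$ such that $\partial_t \bm A + \bm E = \grad \varphi$. Substituting and using the same argument as in \Cref{thm:gauge} — namely $(\grad \varphi, \bm B + \bm B_H) = -(\varphi, \div(\bm B + \bm B_H)) = 0$, with the boundary contribution vanishing because $\varphi \in H_0^1$ and both divergences vanish — this term reduces to $-(\bm E, \bm B + \bm B_H)$.

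Summing the two surviving contributions yields
\begin{equation*}
    \frac{d}{dt}\tilde{\mathcal{H}}
    = -(\bm E, \bm B + \bm B_H) - (\bm B - \bm B_H, \bm E)
    = -2(\bm E, \bm B),
\end{equation*}
and the final step is to invoke the constitutive relation \eqref{eqn:electric-field}, $\bm E = -\bm u \times \bm B$, to observe that $\bm E \cdot \bm B = -(\bm u \times \bm B)\cdot \bm B = 0$ pointwise, so $(\bm E, \bm B) = 0$ and thus $\frac{d}{dt}\tilde{\mathcal{H}} = 0$. The main subtlety — beyond careful bookkeeping of boundary terms — is ensuring that the existence of the gauge $\varphi \in H_0^1$ can be invoked, which is exactly where the assumed exactness at $H_0(\curl)$ (i.e.~$\beta_2 = 0$) is used; this mirrors the role it plays in \Cref{thm:gauge}.
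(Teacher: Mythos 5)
Your proof is correct and uses essentially the same ingredients and structure as the paper's: differentiating $\tilde{\mathcal{H}}$, invoking $\partial_t\bm B_H = \bm 0$ from \Cref{thm:harmonic-form-constant}, obtaining $\partial_t\bm A + \bm E = \grad\varphi$ with $\varphi\in H_0^1$ from exactness at $H_0(\curl)$, killing the gradient contribution via $\div\bm B = 0$ and the boundary condition, and concluding with the pointwise orthogonality of $\bm u\times\bm B$ to $\bm B$. The only difference is bookkeeping — the paper collapses everything to $2(\partial_t\bm A, \bm B)$ before substituting, while you arrive at $-2(\bm E,\bm B)$ — which is the same computation up to the gauge term.
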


\begin{proof}
    Consider first $\partial_t\bm A$.
    From \eqref{eqn:magnetic-adv}, $\curl(\partial_t\bm A + \bm E) = \bm 0$;
    by the Hodge decomposition then, there exists $\varphi \in H^1_0$ such that
    \begin{equation}\label{eqn:A}
        \partial_t\bm A + \bm E = \partial_t \bm{A} - \bm{u}\times \bm{B} = \grad\varphi.
    \end{equation}
    
    Considering the evolution of $\tilde{\mathcal{H}}$,
    \begin{multline}
        \partial_t\tilde{\mathcal{H}}
      = (\partial_t\bm A, \curl \bm A + 2\bm B_H) + (\bm A, \partial_t\curl \bm A)  \\
      = (\partial_t\bm A, \curl \bm A + 2\bm B_H) + (\partial_t\bm A, \curl \bm A)
      = 2(\partial_t\bm A, \bm B)
    \end{multline}
    where in the first equality we use the invariance of $\bm B_H$ (\Cref{thm:harmonic-form-constant}) and in the second we apply integration by parts.
    %With $\bm A = \bm A_0$,
    Using \eqref{eqn:A}, we have
    \begin{equation}
        \partial_t\tilde{\mathcal{H}}
      = 2(\bm u \times \bm B + \grad\varphi, \bm B)
      = - 2(\varphi, \div \bm B)
      = 0,
    \end{equation}
    where we use the orthogonality of the cross product, and that the boundary term vanishes due to the boundary condition $\varphi=0$ for $\varphi\in H_0^1$.
\end{proof}

We conclude by showing that a generalized version of the Arnold inequality holds.
%For domains with nontrivial harmonic 1-forms (i.e.~where the generalized helicity $\tilde{\mathcal{H}}_{\bm A}$ is gauge-dependent) this holds under a certain choice of $\bm A$;
%for domains with no nontrivial harmonic 1-forms (e.g.~our domain of interest, the periodic box) the helicity is gauge-invariant and the discrete Arnold inequality \eqref{eq:generalized-discrete-arnold} holds in general.

\begin{theorem}[Generalized Arnold inequality]\label{thm:generalized_arnold}
    % Let $\bm{A} \in H_0(\curl)$ be the magnetic potential (satisfying $\bm B = \curl \bm A + \bm B_H$ \eqref{Bhodge}) with no harmonic components, i.e.~with Hodge decomposition $\bm A = \grad \varphi + \curl \bm \psi$ for some $\varphi$ and $\bm \psi$.
    % There then exists some constant $C > 0$ (dependent only on the domain $\Omega$) such that
    % \begin{equation}\label{eq:generalized-discrete-arnold}
    %         |\Tilde{\mathcal{H}}|  \le  C^{-1}\mathcal{E}.
    % \end{equation}
    There exists some constant $C > 0$ (dependent only on the domain $\Omega$) such that
    \begin{equation}\label{eq:generalized-discrete-arnold}
            |\Tilde{\mathcal{H}}|  \le  C^{-1}\mathcal{E}.
    \end{equation}
\end{theorem}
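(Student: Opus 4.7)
The plan is to reduce the generalized Arnold inequality to the classical one (proved in \Cref{sec:proof-arnold-ineq}) by combining the $L^2$-orthogonality of the Hodge decomposition \eqref{Bhodge} with an appropriate gauge fixing.

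First, by the gauge invariance established in \Cref{thm:gauge}, I would replace $\bm A$ with its minimum-$L^2$-norm representative among the potentials satisfying $\curl \bm A = \bm B - \bm B_H$. Since we assume $\beta_2 = 0$, the kernel of $\curl$ on $H_0(\curl)$ coincides with $\grad H_0^1$, so this minimum-norm $\bm A$ is uniquely characterized by $\bm A \perp \grad H_0^1$ in $L^2$ (equivalently, $\div \bm A = 0$ weakly). For such an $\bm A$, the standard Poincar\'e--Maxwell inequality furnishes a constant $C_P > 0$ depending only on $\Omega$ such that $\|\bm A\| \le C_P \|\curl \bm A\|$; this is precisely the inequality underpinning the classical Arnold inequality in \Cref{sec:proof-arnold-ineq}.

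Second, I would apply Cauchy--Schwarz to $\tilde{\mathcal{H}} = (\bm A, \curl \bm A + 2\bm B_H)$ together with the Hodge orthogonality $\curl \bm A \perp \bm B_H$ in $L^2$, which gives $\|\bm B\|^2 = \|\curl \bm A\|^2 + \|\bm B_H\|^2$. Combined with the Poincar\'e--Maxwell inequality, this yields
\begin{equation}
    |\tilde{\mathcal{H}}|
    \le \|\bm A\|\bigl(\|\curl \bm A\| + 2\|\bm B_H\|\bigr)
    \le C_P \|\curl \bm A\|\bigl(\|\curl \bm A\| + 2\|\bm B_H\|\bigr).
\end{equation}
Young's inequality on the cross term, $2\|\curl \bm A\|\|\bm B_H\| \le \|\curl \bm A\|^2 + \|\bm B_H\|^2 = \|\bm B\|^2$, then gives
\begin{equation}
    |\tilde{\mathcal{H}}| \le C_P \|\curl \bm A\|^2 + C_P \|\bm B\|^2 \le 2 C_P \mathcal{E},
\end{equation}
so the generalized Arnold inequality \eqref{eq:generalized-discrete-arnold} follows with $C = (2C_P)^{-1}$.

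The main obstacle is justifying the Poincar\'e--Maxwell inequality $\|\bm A\| \le C_P \|\curl \bm A\|$ with a constant independent of $\bm B$ and $\bm B_H$. This is a standard consequence of Maxwell compactness for bounded Lipschitz domains when one restricts to the orthogonal complement of $\ker \curl$ in $H_0(\curl)$; the hypothesis $\beta_2 = 0$ is precisely what guarantees that this complement is exactly the divergence-free subspace of $H_0(\curl)$, so that the gauge fixing is clean and no residual harmonic 1-forms need to be accommodated. Notably, this argument makes no use of the harmonic 2-form $\bm B_H$ beyond its $L^2$-orthogonality to $\curl \bm A$, so no additional assumptions on $\beta_1$ are required.
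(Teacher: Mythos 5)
Your proposal is correct and takes essentially the same route as the paper's proof: gauge-fixing away the gradient component (the paper sets $\varphi = 0$ in the Hodge decomposition, which is exactly your minimum-norm, divergence-free representative under $\beta_2 = 0$), then Cauchy--Schwarz, the generalized Poincar\'e (Poincar\'e--Maxwell) inequality, Young's inequality, and the $L^2$-orthogonality of $\curl \bm A$ and $\bm B_H$. The only difference is bookkeeping (you split $\|\curl\bm A + 2\bm B_H\|$ by the triangle inequality before applying Young, whereas the paper applies Young first and then expands by orthogonality), which affects only the constant.
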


\begin{proof}
    As in \eqref{Ahodge}, let $\bm A$ have Hodge decomposition $\bm A = \grad \varphi + \curl \bm \psi$.
    As $\Tilde{\mathcal{H}}$ is independent of $\varphi$, let us assume without loss of generality $\varphi = 0$.
    Applying the Cauchy--Schwarz, generalized Poincar\'e and Young's inequality respectively, there exists some constant $C > 0$ such that
    \begin{subequations}
    \begin{multline}
            |\tilde{\mathcal{H}}|
        \le \|\bm A\|\|\curl \bm A + 2\bm B_H\|
        \le (2C)^{-1}\|\curl \bm A\|\|\curl \bm A + 2\bm B_H\|  \\
        \le (4C)^{-1}(\|\curl \bm A\|^2 + \|\curl \bm A + 2\bm B_H\|^2).
    \end{multline}
    Since $\bm B_H$ is a harmonic 2-form, it is $L^2$-orthogonal to $\curl \bm A$, implying
    \begin{equation}
            |\tilde{\mathcal{H}}|
        \le (4C)^{-1}(\|\curl \bm A\|^2 + \|\curl \bm A + 2\bm B_H\|^2)
        \le C^{-1}\|\bm B\|^2.
    \end{equation}
    \end{subequations}
\end{proof}

Considering our proposed semi-discretisation \eqref{eq:structure-preserving-scheme}, similar arguments to those in the proof of \Cref{thm:sp-properties} hold under these nontrivial topologies, for the analogous results presented above.
In summary, the discrete magnetic field $\bm B_h$ has Hodge decomposition $\bm B_h = \curl \bm A_h + \bm B_{Hh}$ for some $\bm A_h \in H_0^h(\curl)$.
The discrete harmonic component $\bm B_{Hh}$ remains constant in time, the discrete generalized helicity $\tilde{\mathcal{H}}_h$ remains constant, %under an approriate choice of $\bm A_h = \bm A_{0h}$,
and the generalized discrete Arnold inequality \eqref{eq:generalized-discrete-arnold} holds;
these results transfer to the fully discrete case when using a Gauss collocation method for the time discretization.
We have therefore the same quantitative and qualitative guarantees for numerical solutions derived from our discrete scheme on domains with nontrivial topologies.

\begin{remark}[Domains with nontrivial harmonic 1-forms]\label{rmk:harmonic-1forms}
    We may still define the generalized helicity \eqref{eqn:generalized-helicity} where the de~Rham complex is not exact at $H_0(\curl)$ (Betti number $\beta_2 \ne 0$), however the definition is gauge-dependent.
    To see this, consider the Hodge decomposition
    \begin{equation}
        \bm A = \grad \varphi + \curl \bm \psi + \bm A_H,
    \end{equation}
    where $\bm A_H$ is a harmonic 1-form;
    while $\bm A_H$ is not determined by the condition $\bm B = \curl \bm A + \bm B_H$, different choices of $\bm A_H$ give rise to different values of $\tilde{\mathcal{H}}$.
    Due to the gauge-dependence, the statement of \Cref{thm:conservation-general-helicity} may change to:
    there exists a certain choice of magnetic potential $\bm A = \bm A_1 \in H_0(\curl)$ (satisfying $\bm B = \curl \bm A + \bm B_H$) such that the generalized magnetic helicity $\tilde{\mathcal{H}}$ is constant.
    The statement of \Cref{thm:generalized_arnold} may change to:
    there exists a certain choice of magnetic potential $\bm A = \bm A_2$ such that the generalized Arnold inequality \eqref{eq:generalized-discrete-arnold} holds.
    However, these potentials $\bm A_1$ and $\bm A_2$ may differ.
    Similar arguments hold on the discrete level.
\end{remark}

\begin{remark}
    There have been many efforts in the literature to generalize the concept of helicity to topologically nontrivial domains.
    See, for example, the notions of relative helicity \cite{ArnoldTopologicalMethodsHydrodynamics2021} and the universal magnetic helicity integral proposed in \cite{hornigUniversalMagneticHelicity2006}.
    
    For domains $\Omega$ embedded in $\mathbb{R}^3$, many approaches use a potential $\hat{\bm A} \in H(\curl)$ such that $\bm B = \curl \hat{\bm A}$, without enforcing the boundary condition $\hat{\bm A} \times \bm n = \bm 0$.
    Gauge invariance is then recovered by introducing a certain boundary integral in $\hat{\bm A}$ on $\partial\Omega$.
    Over domains topologically equivalent to tori, the Bevir--Gray helicity \cite{bevir1980relaxation} is defined as
    \begin{equation}\label{eq:bevir-gray}
        \mathcal{H}_{\rm BG} \coloneqq \int_{\Omega} \hat{\bm A}\cdot \bm B - \oint_{\gamma_A} \hat{\bm A}\cdot \bm t_A\oint_{\gamma_B}\hat{\bm A}\cdot \bm t_B,
    \end{equation}
    where $\gamma_A$ and $\gamma_B$ are closed paths on $\partial\Omega$ along the minor (A-cycle) and major (B-cycle) circumferences, and $\bm t_A$, $\bm t_B$ are their respective unit tangents. McTaggart \& Valli \cite{mactaggart2019magnetic} extended this definition to more general topologies.
    
    The Bevir--Gray construction addresses the same topological difficulty as our generalized helicity $\tilde{\mathcal{H}}$ \eqref{eqn:generalized-helicity}, but by a different mechanism: it uses cutting surfaces to define the necessary integrals, whereas our approach uses harmonic forms. The two settings are therefore not straightforward to compare. In particular, in our definition the vector potential $\bm A$ is chosen as the preimage of the curl-range component of $\bm B$, while in the Bevir--Gray framework it appears that $\hat{\bm A}$ is given a priori and $\bm B$ is then defined by $\curl \hat{\bm A}$. Clarifying the precise correspondence between these viewpoints would require further investigation.
\end{remark}

\section{Numerical experiments}\label{sec:Numerical-experiment}
To demonstrate the benefits provided by the structures preserved by our proposed scheme, we perform various simulations of~\eqref{eqn:magneto-frictional-equations} under two different ICs, i.e.~Hopf fibration \cite{smietIdealRelaxationHopf2017} and IsoHelix \cite{pontin2009lagrangian,candelaresiMimeticMethodsLagrangian2014a}, which are commonly tested for Lagrangian methods in magnetic relaxation. Each case are tested on the domain $\Omega = (-4, 4)\times (-4, 4)\times (-10, 10)$ with fixed coupling parameter $\tau = 100$. For the spatial discretization, we employ in each case the lowest-order N\'ed\'elec finite elements of the first kind and the lowest-order Raviart--Thomas finite element in the same de~Rham complex. We use the lowest order Gauss method, i.e. the implicit midpoint rule, for the time discretization. Unless specified otherwise, we use a coarse mesh with $4\times 4\times 10$ hexahedral cells in the $x \times y \times z$ directions, and a fixed uniform step size $\Delta t = 10$ with final time $T = 10^4$. For a domain with trivial topology, Dirichlet boundary conditions are imposed on each face. For a domain with nontrivial topology, Dirichlet boundary conditions are imposed on the side faces and periodic conditions are imposed on the top and bottom in the $z$-direction. In our implementation, we use Newton's method to solve the nonlinear systems, and a sparse LU factorization to solve the arising linear systems.

\subsection{Projection of ICs and evaluation of helicity}\label{sec:helicity_evaluation}
The exact divergence-free condition on $\bm B_h$ is crucial for our structure-preserving properties (\Cref{thm:sp-properties}).
For each simulation, the projection of the continuous initial data $\bm B_0$ to the divergence-free subspace of $H_0^h(\div)$ necessitates the solution of a saddle-point problem:
{find $(\bm B_{0,h}, p_h)\in H_0^h(\div)\times L_0^{2,h}$ such that
\begin{subequations}
\begin{align}
    (\bm B_{0,h}, \bm C_h) - (p_h, \nabla\cdot \bm C_h) &= (\bm B_0, \bm C_h), \\
    (q_h, \nabla\cdot \bm B_{0,h}) &= 0,
\end{align}
\end{subequations}
for all $(\bm C_h, q_h)\in H_0^h(\div)\times L_0^{2,h}$, where $\bm B_{0, h} \in H_0^h(\div)$ represent our discrete, divergence-free ICs.}

The divergence-free condition is required in particular for the discrete helicity $\mathcal H_h$ to be well-defined.
At each timestep, we evaluate $\mathcal H_h$ for a given magnetic field configuration $\bm{B}_h$ by finding $\bm{A}_{h}\in H_0^h(\curl)$ such that 
\begin{equation}\label{eqn:evaluate-magnetic-potential}
    (\curl\bm{A}_{h}, \curl\bm{C}_h)  =  (\bm{B}_{h}, \curl\bm{C}_h)  \quad
    \forall \bm{C}_h\in H_0^h(\curl).
\end{equation}
This system is symmetric and consistent, but singular. Therefore, we use MINRES with the minimum norm refinement strategy of Liu \textit{et al.}~\cite{liu2025obtaining} to find the pseudoinverse solution. This is used to define the discrete helicity $\mathcal H_h$.

\subsection{Hopf fibration (non-zero helicity)}

The former initial configuration we consider is the Hopf fibration \cite{smietIdealRelaxationHopf2017},
\begin{equation}
    \bm{B}_0
    =
    \frac{4 \sqrt{s}}{\pi\left(1+r^2\right)^3 \sqrt{\omega_1^2+\omega_2^2}}\left(\begin{array}{c}
        2\left(\omega_2 y-\omega_1 x z\right) \\
        -2\left(\omega_2 x+\omega_1 y z\right) \\
        \omega_1\left(-1+x^2+y^2-z^2\right)
    \end{array}\right),
    \label{eqn:hopf-fibre}
\end{equation}
where $\omega_1, \omega_2 \in \mathbb R$ are winding numbers, and $s \ge 0$ is a scaling parameter.
We choose $\omega_1 = 3$, $\omega_2 = 2$, $s = 1$, such that the field lines form three windings in the poloidal direction for every two in the toroidal direction, thus exhibiting a non-zero helicity. 

\subsubsection{Long-time preservation test}
\Cref{fig:structure-preserving-scheme} shows the preserved helicity and decreasing energy for domains with the trivial and nontrivial topology, bounded below in both cases by a constant multiple of the helicity (\ref{eqn:arnold-inequality}, \ref{eq:generalized-discrete-arnold}). 
\Cref{fig:helicity-error-comparison} shows the evolution of the helicity and the divergence of the magnetic field, demonstrating both are conserved for long times. 
\begin{figure}[htpb]
    \centering
    \includegraphics[width=1.0\linewidth]{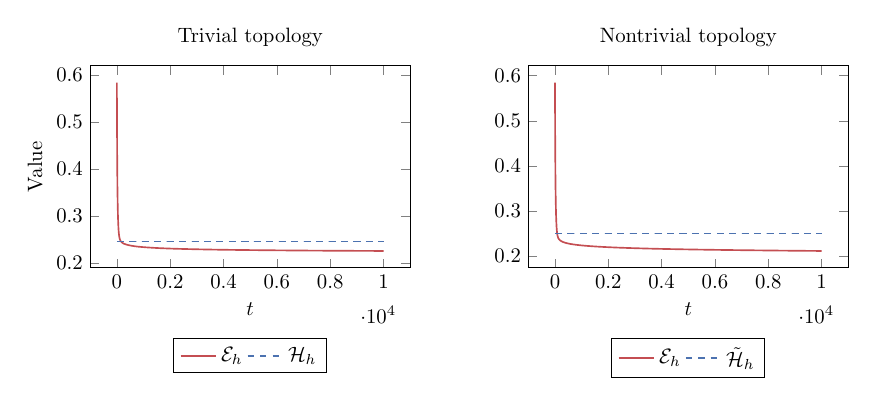}
    \caption{Discrete energy dissipation \eqref{eq:energy-dissipation} and helicity conservation \eqref{eq:helicity-conservation} for a comparable trivial and nontrivial topology, under our structure-preserving scheme \eqref{eq:structure-preserving-scheme}}
    \label{fig:structure-preserving-scheme}
\end{figure}

\begin{figure}[htpb]
    \centering
    \includegraphics[width=1.0\linewidth]{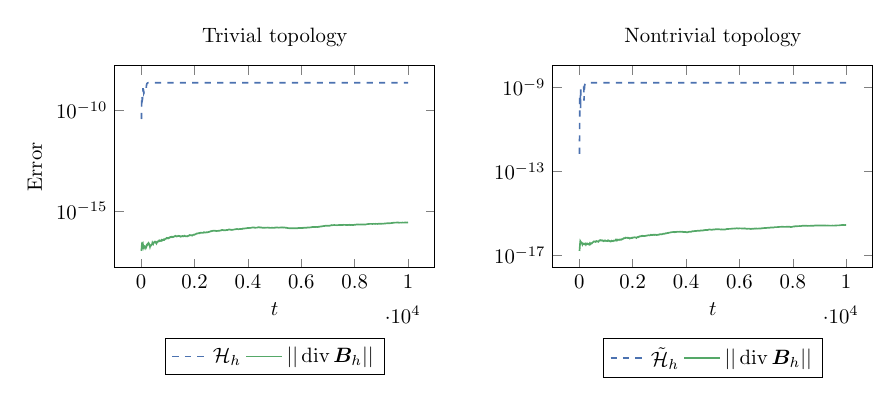}
    \caption{Errors in helicity ($|\mathcal{H}_h - \mathcal{H}_h(0)|$ and $|\tilde{\mathcal{H}}_h(t) - \tilde{\mathcal{H}}_h(0)|$ respectively) and $\|\div \bm B_h\|$ for a comparable domain with trivial and nontrivial topology, under our structure-preserving scheme \eqref{eq:structure-preserving-scheme}}
    \label{fig:helicity-error-comparison}
\end{figure}

\subsubsection{Significance of the auxiliary magnetic field}
% The auxiliary variable $\bm{H} = \mathbb{Q}_c \bm{B}$ is essential for helicity preservation.
To what extent is the auxiliary variable $\bm H_h \approx \bm B_h$ introduced by the framework in \cite{andrews2024enforcing} necessary?
%\bda{Weird to have a question?}
The following semi-discrete scheme replaces $\bm H_h$ with $\bm B_h$.

\begin{problem}[$H(\div)$-conforming semi-discrete scheme]
    Find $\bm B_h \in C^1(\mathbb{R}_+; H_0^h(\div))$ satisfying the ICs $\bm B|_{t=0} = \bm B_0$ and $(\bm E_h, \bm j_h) \in C^0(\mathbb{R}_+; H_0^h(\curl)^2)$ such that
    \begin{subequations}\label{eqn:without-H}
    \begin{align}
        (\partial_t {\bm{B}}_h, \bm{C}_h) + (\curl\bm{E}_h, \bm{C}_h) &= 0,  \\
        (\bm{E}_h, \bm{F}_h) + \tau ((\bm{j}_h \times \bm{B}_h) \times \bm{B}_h, \bm{F}_h) &= 0,  \\
        (\bm{j}_h, \bm{K}_h) &= (\bm{B}_h, \curl\bm{K}_h),
    \end{align}
    \end{subequations}
    at all times $t \in \mathbb{R}_+$ and for all $(\bm{C}_h, \bm{F}_h, \bm{K}_h) \in H_0^h(\div) \times H_0^h(\curl)^2$.
\end{problem}

% Again, for the time discretization of \eqref{eqn:without-H} we use a Gauss collocation method.
We reapply the techniques of \Cref{sec:helicity_evaluation} to project the ICs $\bm B_0$ into the divergence-free subspace of $H_0^h(\div)$ and to evaluate the helicity.
\Cref{fig:comparison_our_Hdiv} compares the evolution of $\cal E$ and $\cal H$ in the schemes \eqref{eq:structure-preserving-scheme} and \eqref{eqn:without-H}. \Cref{fig:error-our-hdiv} shows the error plot for the helicity and divergence-free condition.  
Without $\bm H_h$, the helicity artificially dissipates, and can no longer prevent the energy from decaying to zero;
the numerical solution from \eqref{eqn:without-H} converges to a non-physical trivial steady state. 
\begin{figure}[htpb]
    \centering    \includegraphics[width=1.0\linewidth]{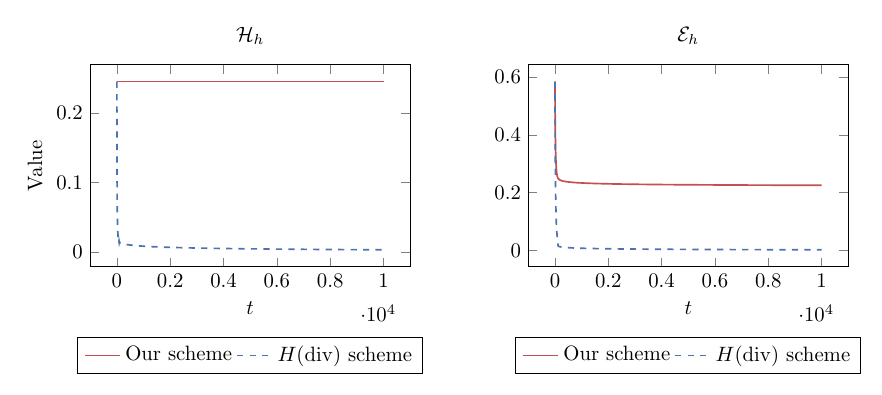}
    \caption{Helicity, $\mathcal{H}_h$, and energy, $\mathcal{E}_h$, for our proposed scheme \eqref{eq:structure-preserving-scheme} and the $H(\div)$-conforming scheme \eqref{eqn:without-H}}
    \label{fig:comparison_our_Hdiv}
\end{figure}
\begin{figure}[htpb]
    \centering
    \includegraphics[width=1.0\linewidth]{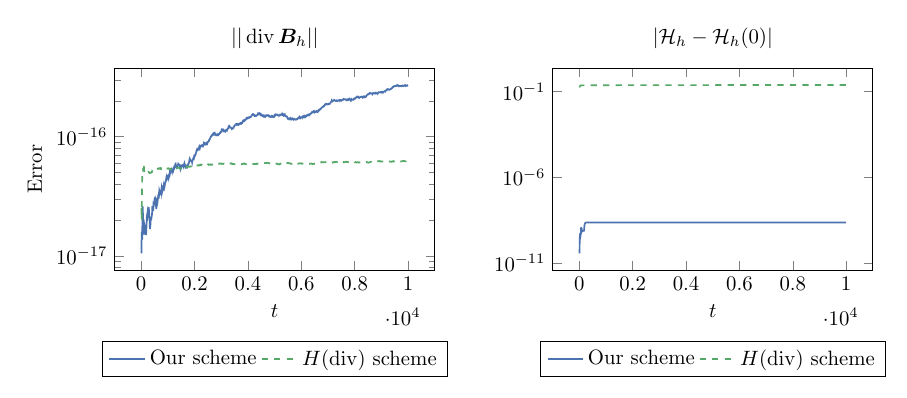}
    \caption{Errors $\|\div \bm B_h\|$ and $|\mathcal{H}_h - \mathcal{H}_h(0)|$ for our proposed scheme \eqref{eq:structure-preserving-scheme} and the $H(\div)$-conforming scheme \eqref{eqn:without-H}}
    \label{fig:error-our-hdiv}
\end{figure}

\subsubsection{Magnetic relaxation with other schemes}
Inspired by classical finite element discretizations for the MHD system \cite{GuzmanConformingdivergencefreeStokes2014, schotzau2004mixed}, we compare our proposal against $H(\curl)$- and $H^1$-conforming discretizations.
We temporarily refine our timestep $\Delta t = 1$, using a shorter final time $T = 1000$, again discretizing in time using the implicit midpoint method.

\begin{problem}[$H(\curl)$-conforming semi-discrete scheme]\label{pb:Hcurl-method}
    Find $\bm B_h \in C^1(\mathbb{R}_+; H^h(\curl))$ satisfying the ICs $\bm B|_{t=0} = \bm B_0$ (assuming $\bm B_0 \in H^h(\curl)$) and $\bm u_h \in C^0(\mathbb{R}_+; H_0^h(\div))$ such that
    \begin{subequations}\label{eqn:curl-scheme}
    \begin{align}
        (\partial_t {\bm{B}}_h, \bm{C}_h)  &=  (\bm u_h \times \bm B_h, \curl\bm{C}_h),  \\
        (\bm u_h, \bm v_h)  &=  \tau ((\curl \bm B_h \times \bm B_h), \bm v_h),
    \end{align}
    \end{subequations}
    at all times $t \in \mathbb{R}_+$ and for all $(\bm{C}_h, \bm{v}_h) \in H^h(\curl) \times H_0^h(\div)$.
\end{problem}

\begin{remark}        
    For \Cref{pb:Hcurl-method}, by integration by parts
    \begin{subequations}
    \begin{equation}
        (\bm u\times \bm B, \curl \bm C) = (\curl(\bm u\times \bm B), \bm C) + \int_{\partial \Omega}(\bm u \times \bm B)\cdot (\bm n\times \bm C).
    \end{equation}
    The boundary term can be rewritten as 
    \begin{equation}
        \int_{\partial \Omega} (\bm u \times \bm B) \cdot (\bm n\times \bm C)
      = \int_{\partial\Omega}[(\bm u\times \bm B)\times \bm n] \cdot \bm C
      = -\int_{\partial\Omega}(\bm B\cdot \bm n)(\bm u\cdot \bm C),
    \end{equation}
    \end{subequations}
    where in the last term we use the boundary condition $\bm u\cdot \bm n = 0$.
    We see then the boundary condition $\bm B\cdot \bm n=0$ is imposed weakly.
\end{remark}

\begin{problem}[$H^1$-conforming scheme]
    Find $\bm B_h \in C^1(\mathbb{R}_+; H^{1,h}\cap H_0(\div))$ satisfying the ICs $\bm B|_{t=0} = \bm B_0$ (assuming $\bm B_0 \in H^{1,h}\cap H_0(\div)$) and $\bm u_h \in C^0(\mathbb{R}_+; H^{1,h}\cap H_0(\div))$ such that \eqref{eqn:curl-scheme} holds at all times $t \in \mathbb{R}_+$ and for all $(\bm{C}_h, \bm{v}_h) \in (H^{1,h}\cap H_0(\div))^2$.
\end{problem}

\begin{figure}[htpb]
     \centering
     \includegraphics[width=1.0\linewidth]{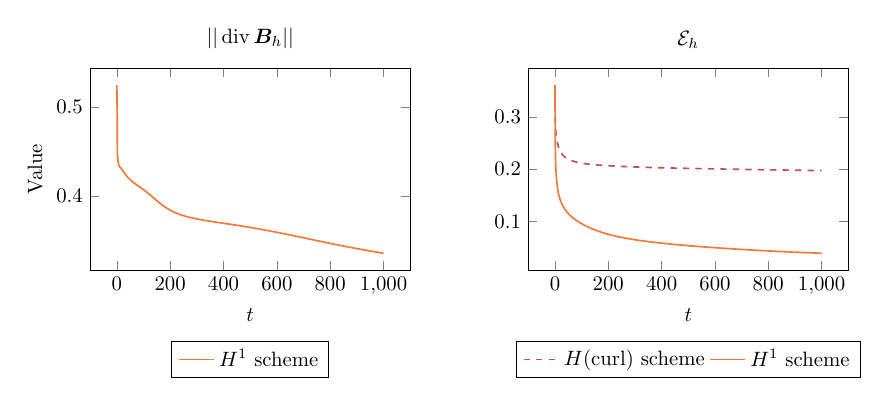}
     \caption{Error $||\div \bm B_h||$ and evolution of $\mathcal{E}_h$ for the $H(\curl)$-- and $H^1$--conforming schemes}
     \label{fig:comparison-hcurl-cg}
\end{figure}

%\bdainline{How did you implement $H^{1,h} \cap H_0(\div)$ in Firedrake? My experience is that Firedrake isn't really equipped for such BCs.}

%\bdainline{Why does the energy in the $H(\curl)$ scheme appear not to be decaying to 0? Does it over a long enough time?}

\Cref{fig:comparison-hcurl-cg} illustrates the divergence of the magnetic field $\|\div \bm B_h\|$ and the dissipation of energy $\mathcal{E}_h$ for the $H(\curl)$- and $H^1$-conforming schemes.
The $H^1$-conforming method fails to preserve the divergence-free condition, while $\div \bm B_h$ is ill-defined for the $H(\curl)$-conforming method as $\bm B_h$ is not $\div$-conforming.
In either case, since $\bm B_h$ is not generally divergence-free, the helicity is not well-defined.
Neither of these methods, therefore, are appropriate for investigating the Parker conjecture.

\subsection{IsoHelix (zero helicity)}
Our latter initial configuration is the IsoHelix~\cite{pontin2009lagrangian,candelaresiMimeticMethodsLagrangian2014a},
\begin{equation}\label{eqn:isoHelix}
    \bm B_0 = \left(\begin{array}{c}
         \alpha(r, z) y \\
         - \alpha(r, z) x \\
         1
    \end{array}\right),
\end{equation}
where $\alpha(r, z) \coloneqq \frac{\pi}{2}z\exp(- \frac{1}{2}r^2 - \frac{1}{4}z^2)$ and $r^2 = x^2 + y^2$.
Such a magnetic field can simply be obtained by twisting a homogeneous field. This field has zero helicity, and will relax to a homogeneous steady state of the form $\bm B = (0, 0, 1)^T$. Since this steady state has an exact closed-form, it allows us to compare the quality of the relaxation in a straightforward way. 

We use our structure-preserving discretization \eqref{eq:structure-preserving-scheme} with periodic boundary conditions in the $z$ direction only. Therefore, the domain has nontrivial topology. We thus monitor the generalized helicity \eqref{eqn:generalized-helicity} and the modified energy $\tilde{\mathcal{E}}_h = \|\bm B_h - \bm B_H\|^2$ as suggested in \cite{candelaresiMimeticMethodsLagrangian2014a}, where the harmonic form $\bm B_H = (0, 0, 1)^T$ is the homogeneous background field.
\Cref{fig:iso_plot} shows that the discrete generalized helicity $\tilde{\mathcal{H}}_h$ remains at $0$.
As a result, the modified energy is monotonically decreasing to $0$ while the harmonic form remains constant according to \Cref{thm:harmonic-form-constant}. \Cref{fig:simulation-isoHelix} demonstrates the evolution of the magnetic field lines as they approach the equilibrium $\bm B = (0, 0, 1)^T$.
\begin{figure}
    \centering
\includegraphics[width=0.5\linewidth]{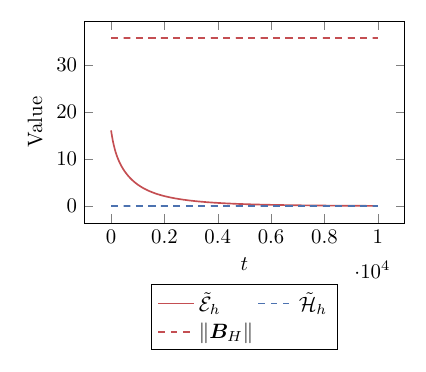}
    \caption{Generalized helicity $\tilde{\mathcal{H}}_h$, modified energy $\tilde{\mathcal{E}}_h$, and harmonic form $\|\bm B_H\|$, under our structure-preserving scheme \eqref{eq:structure-preserving-scheme}}
    \label{fig:iso_plot}
\end{figure}
%\bdainline{Why is $\|\bm B_H\|$ on this plot?}
\begin{figure}[htpb]
    \centering
    \begin{subfigure}{0.25\textwidth}
        \includegraphics[width=\linewidth]{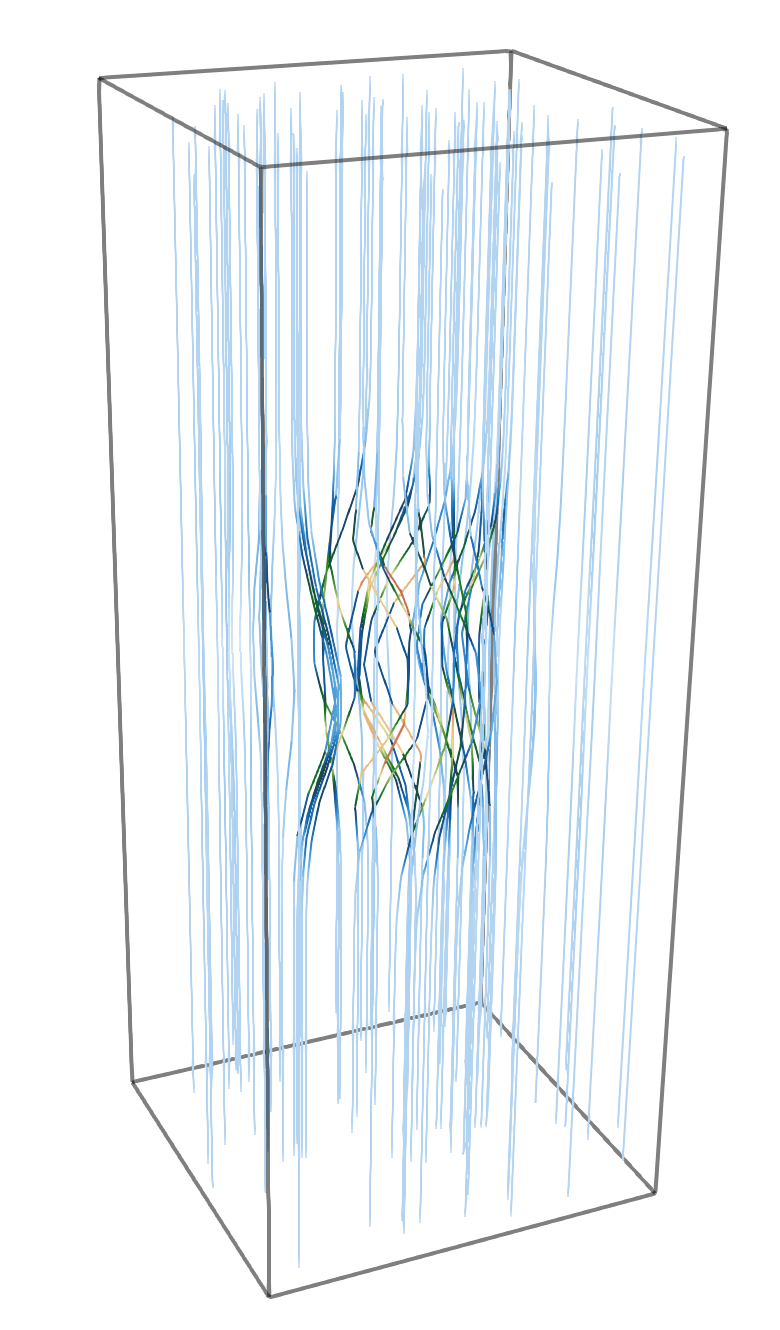}
        \caption{$t=0$}
    \end{subfigure}
    \hfill
    \begin{subfigure}{0.25\textwidth}
        \includegraphics[width=\linewidth]{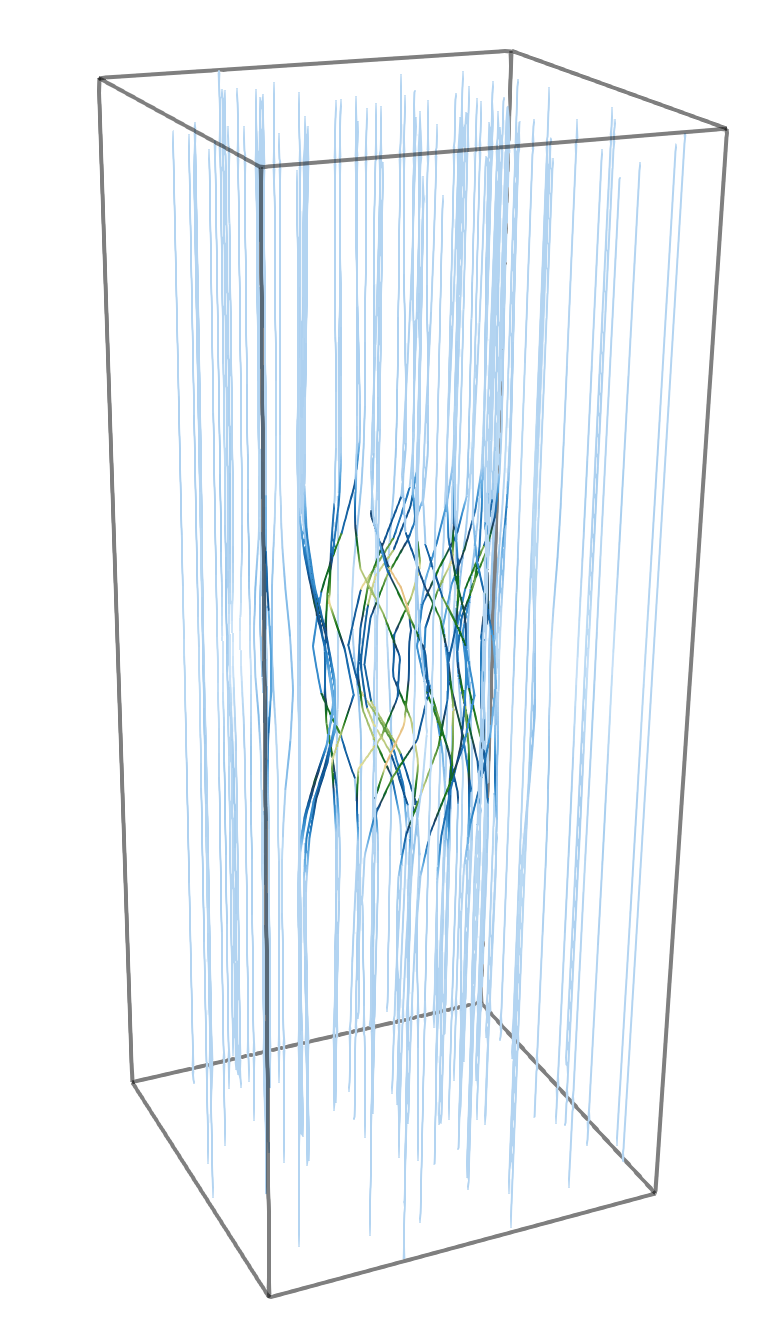}
        \caption{$t=10$}
    \end{subfigure}
    \hfill
    \begin{subfigure}{0.25\textwidth}
        \includegraphics[width=\linewidth]{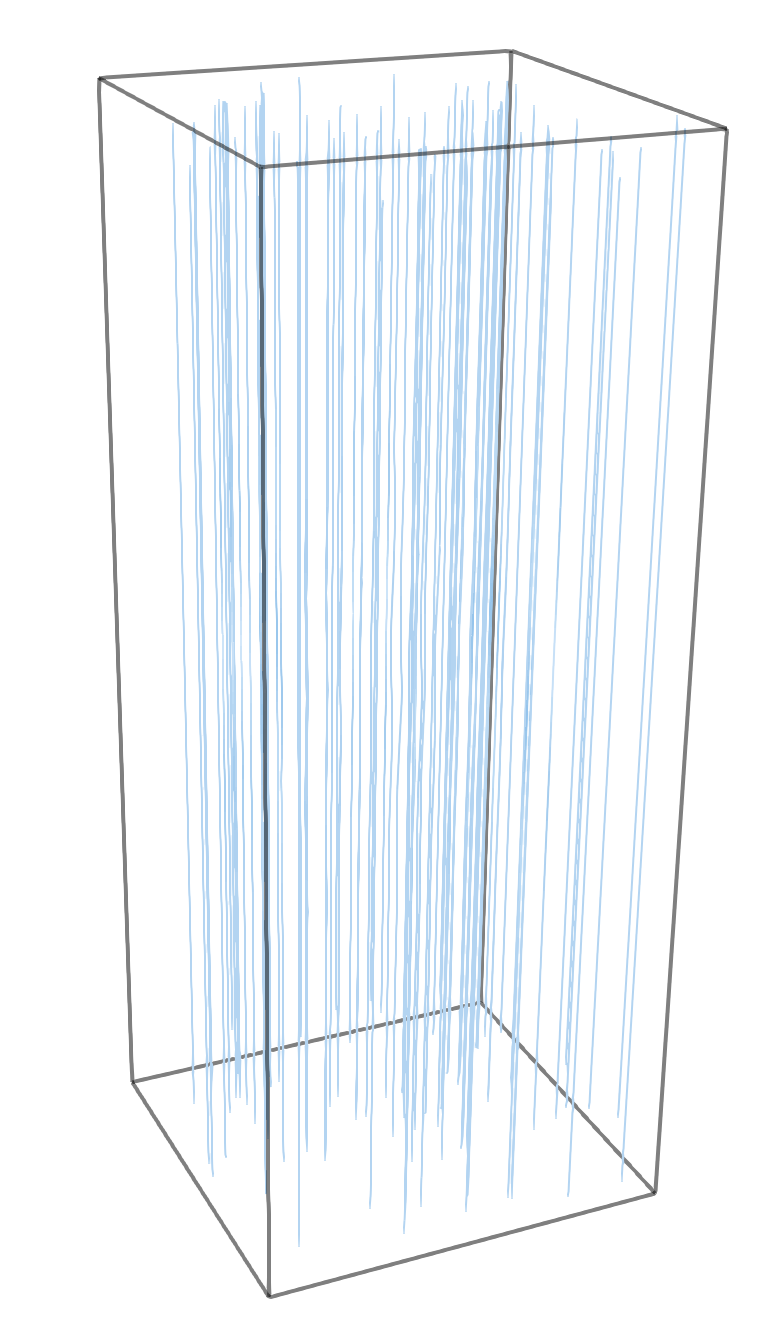}
        \caption{$t=10000$}
    \end{subfigure}
    \hfill
    \begin{subfigure}{0.14\textwidth}
        \includegraphics[width=\linewidth]{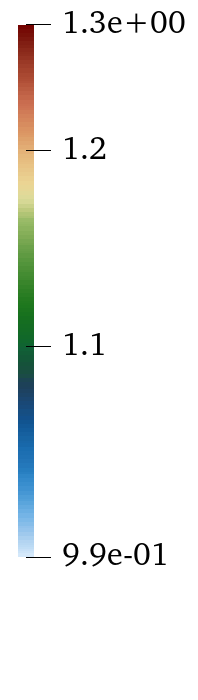}
    \end{subfigure}
    \caption{Magnetic field lines for the IsoHelix simulation on a domain with nontrivial topology, colored by magnetic field strength $\|\bm B_h\|$.}
    \label{fig:simulation-isoHelix}
\end{figure}

\subsection{Larger-scale simulation}
Returning to the longer timestep $\Delta t = 10$, we run our structure-preserving scheme \eqref{eq:structure-preserving-scheme} on a larger-scale problem for the Hopf fibration \eqref{eqn:hopf-fibre}, with a more refined $32\times 32\times 80$ mesh (with 1,026,480 degrees of freedom) on the UK supercomputer ARCHER2 \cite{archer2}.
Figures~\ref{fig:simulation-trivial-domain} and~\ref{fig:simulation-nontrivial-domain} plot cross-sections of the magnetic field lines for the same setup, in domains of trivial and nontrivial topology, respectively.
The numerical results in either case converge to a nontrivial steady state. 
\begin{figure}[htpb]
    \centering
    \begin{subfigure}{0.25\textwidth}
        \includegraphics[width=\linewidth]{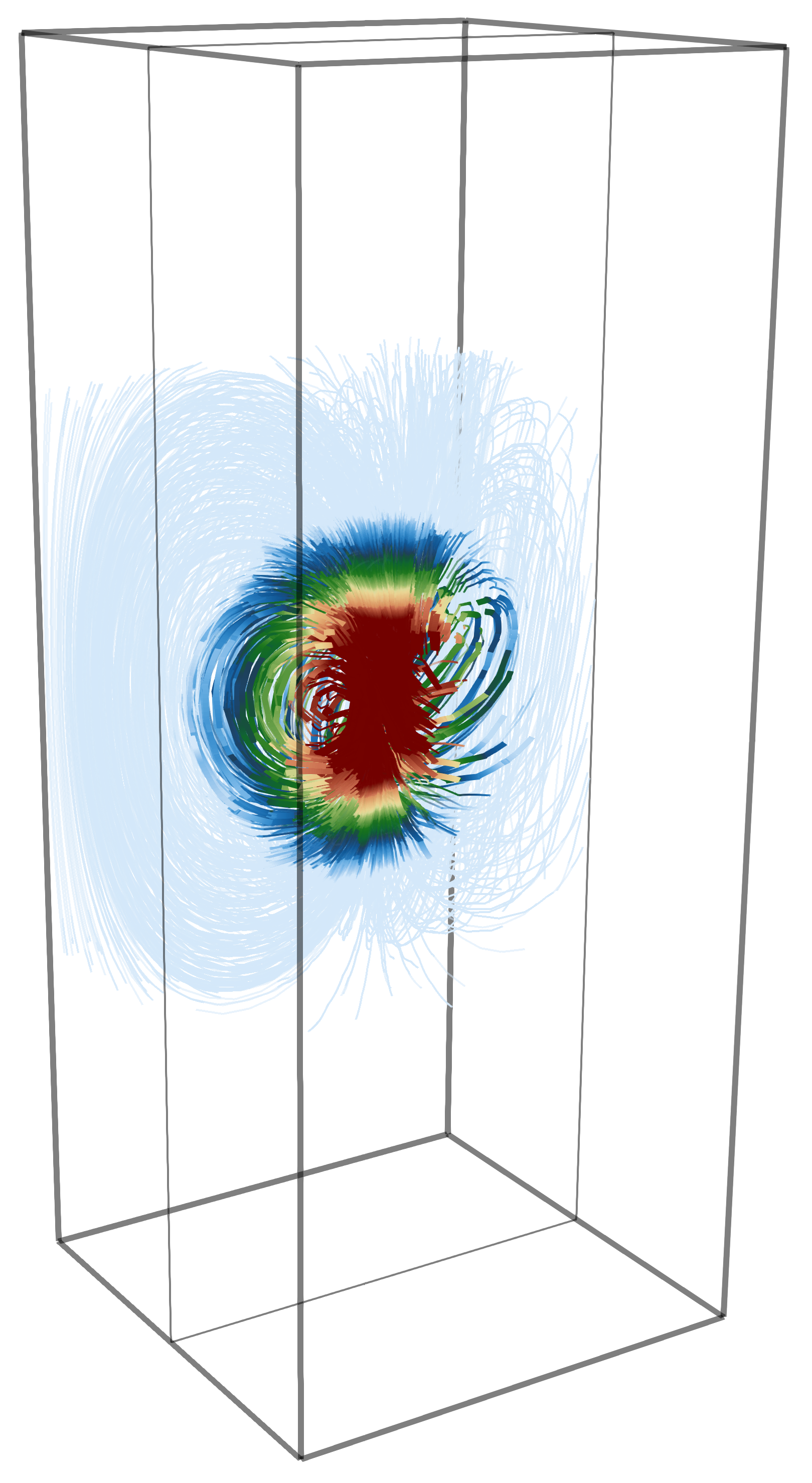}
        \caption{$t=0$}
    \end{subfigure}
    \hfill
    \begin{subfigure}{0.25\textwidth}
        \includegraphics[width=\linewidth]{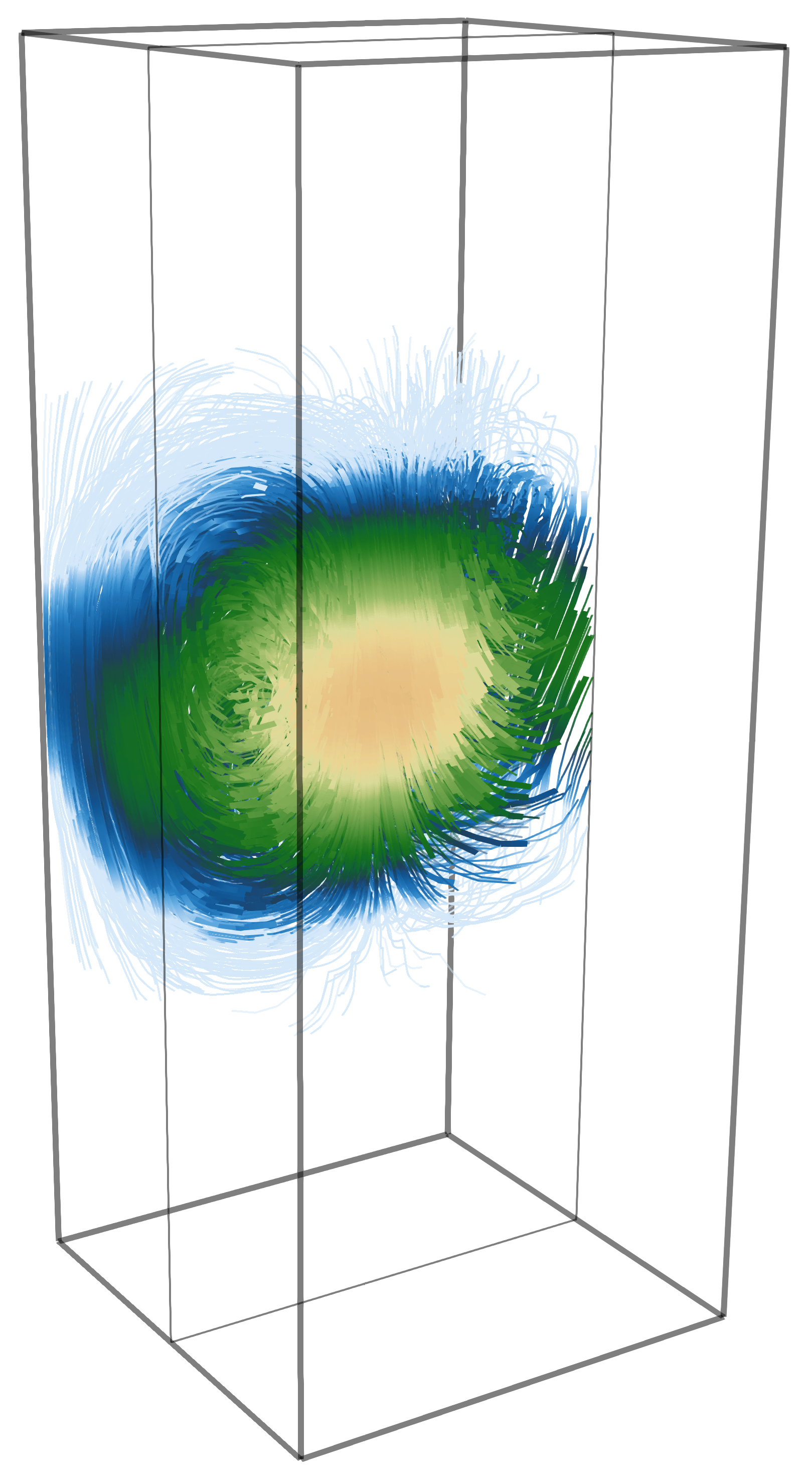}
        \caption{$t=10$}
    \end{subfigure}
    \hfill
    \begin{subfigure}{0.25\textwidth}
        \includegraphics[width=\linewidth]{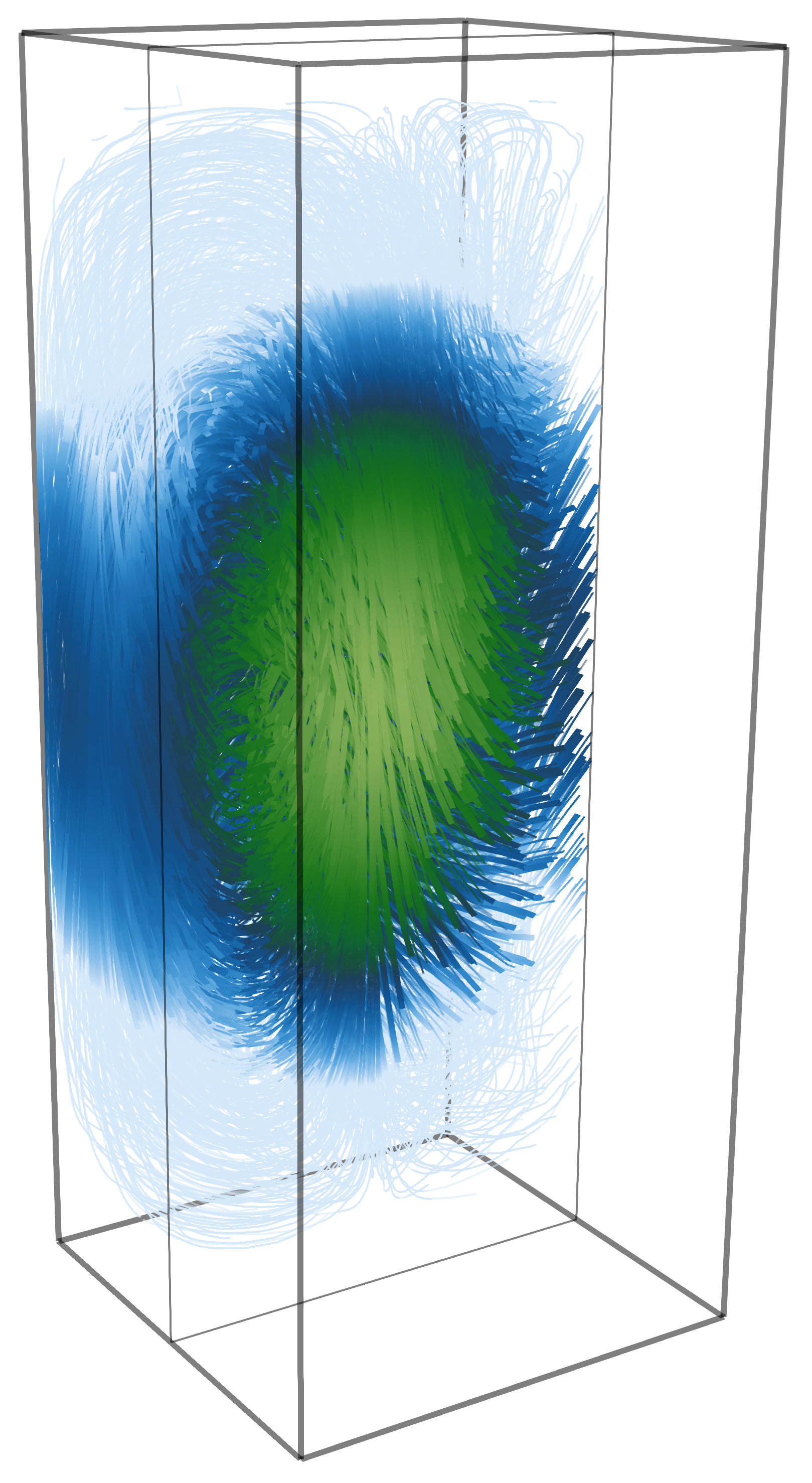}
        \caption{$t=10000$}
    \end{subfigure}
    \hfill
    \begin{subfigure}{0.14\textwidth}
        \includegraphics[width=\linewidth]{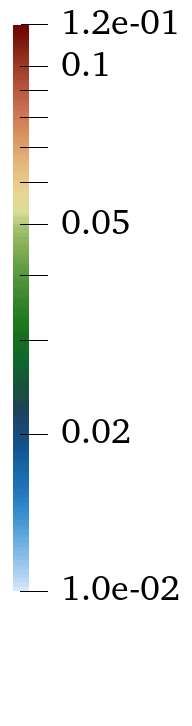}
    \end{subfigure}
    \caption{Magnetic field lines under magnetic relaxation of the Hopf fibration, on a domain with trivial topology, colored by magnetic field strength $\|\bm B_h\|$.}
    \label{fig:simulation-trivial-domain}
\end{figure}

\begin{figure}[htpb]
    \centering
    \begin{subfigure}{0.25\textwidth}
        \includegraphics[width=\linewidth]{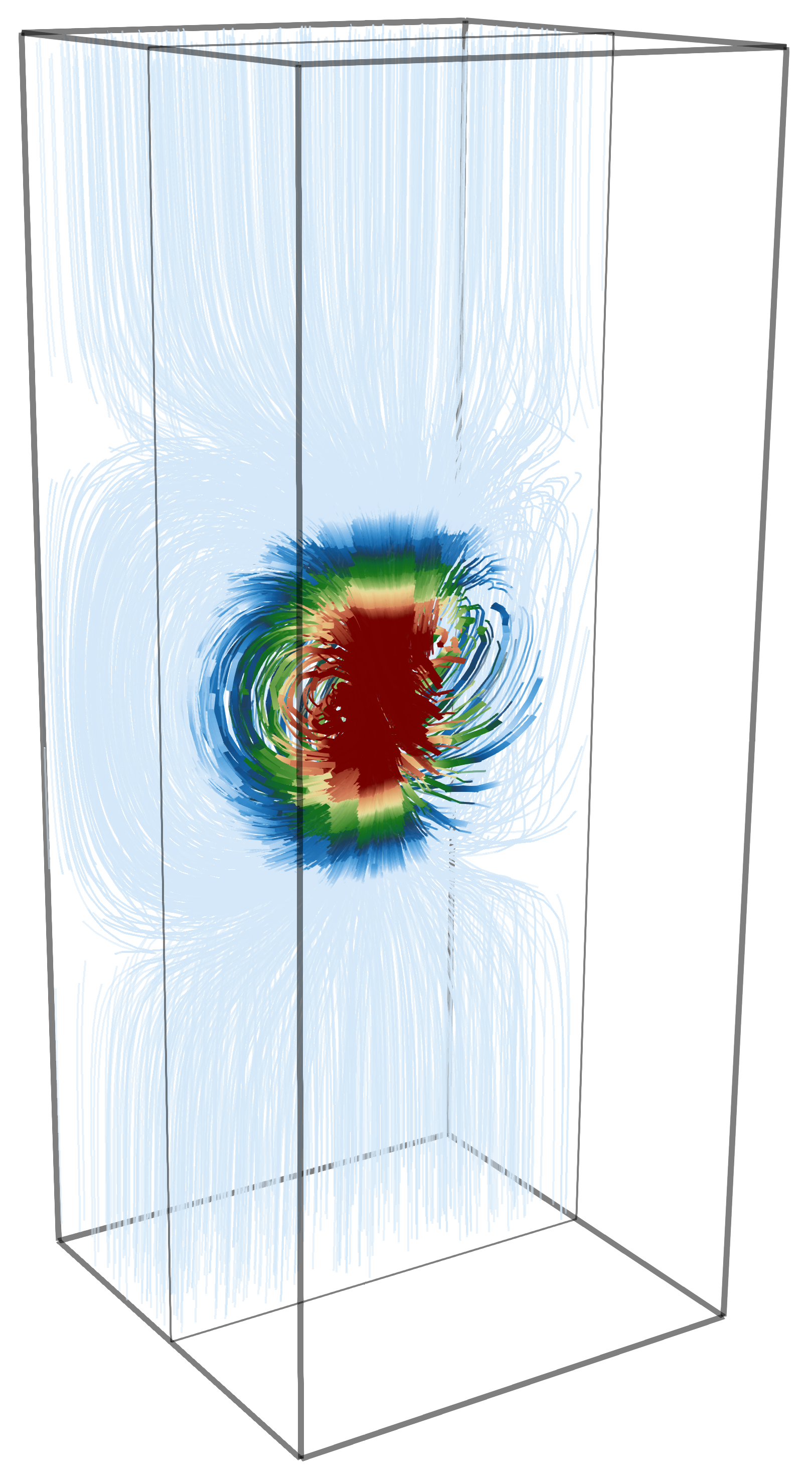}
        \caption{$t=0$}
    \end{subfigure}
    \hfill
    \begin{subfigure}{0.25\textwidth}
        \includegraphics[width=\linewidth]{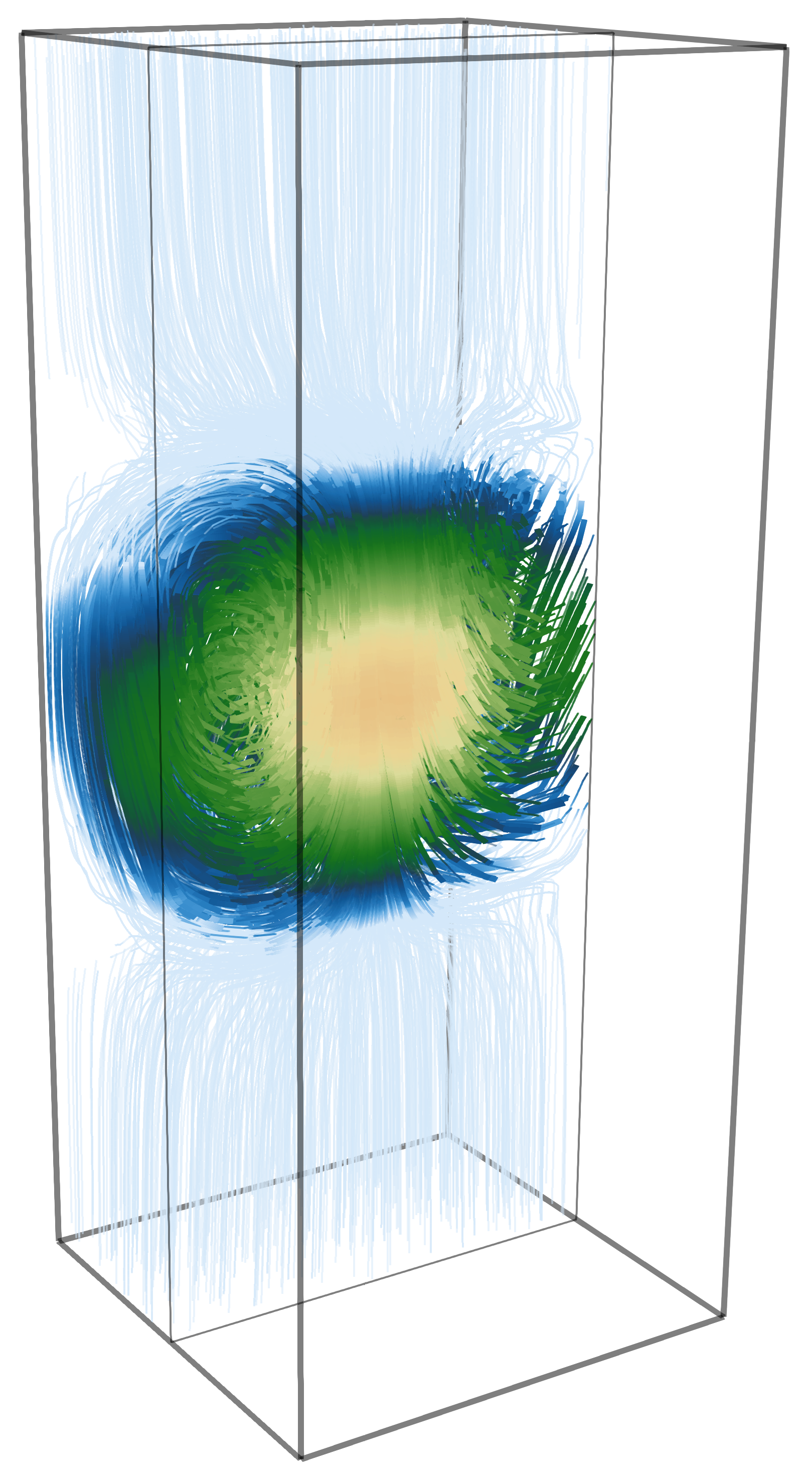}
        \caption{$t=10$}
    \end{subfigure}
    \hfill
    \begin{subfigure}{0.25\textwidth}
        \includegraphics[width=\linewidth]{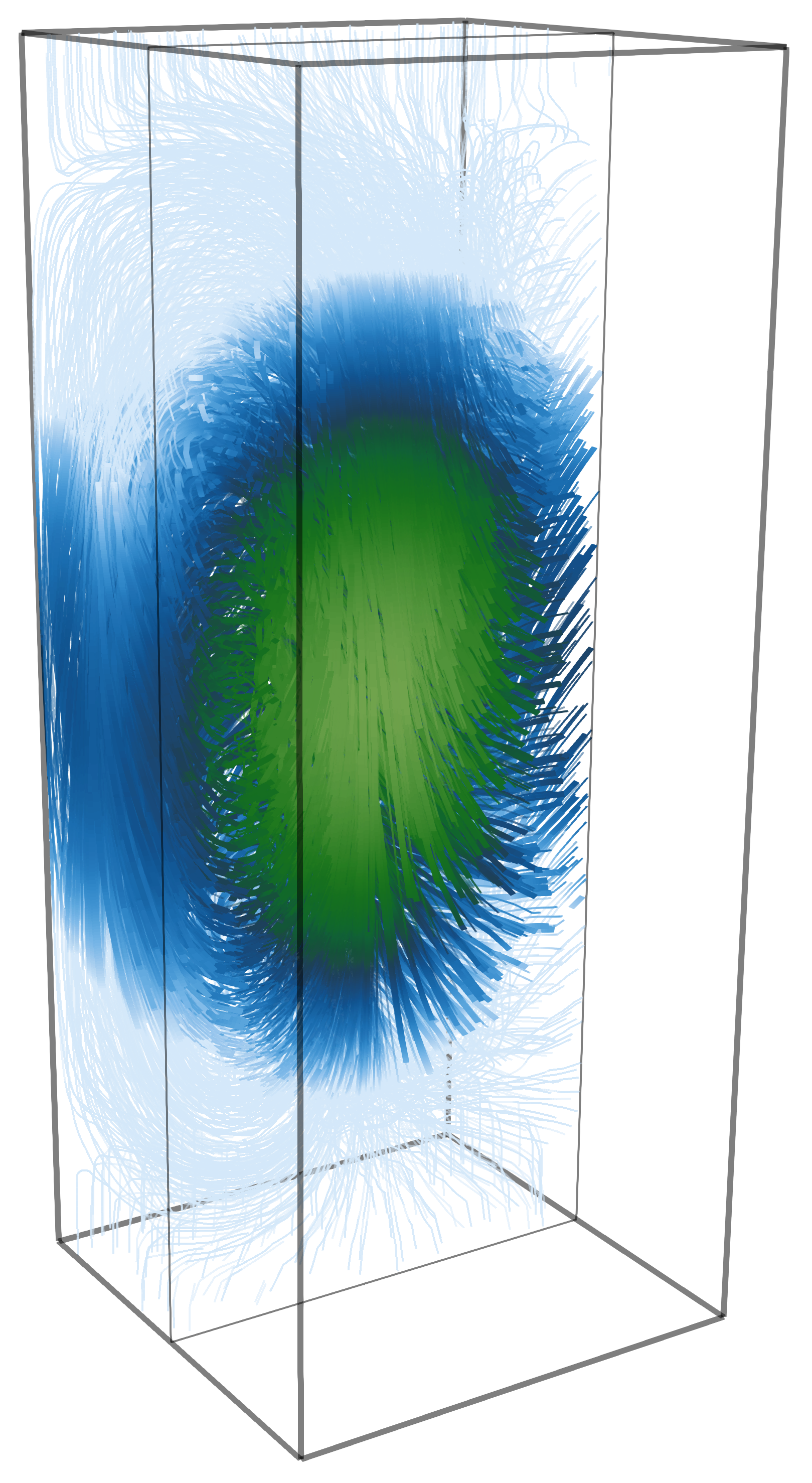}
        \caption{$t=10000$}
    \end{subfigure}
    \hfill
    \begin{subfigure}{0.14\textwidth}
        \includegraphics[width=\linewidth]{images/colorbar.png}
    \end{subfigure}
    \caption{Magnetic field lines under magnetic relaxation of the Hopf fibration, on a domain with nontrivial topology, colored by magnetic field strength $\|\bm B_h\|$.}
    \label{fig:simulation-nontrivial-domain}
\end{figure}

\section{Conclusions}\label{sec:Conclusion}
We presented a finite-element discretization \eqref{eq:structure-preserving-scheme} for the magneto-frictional system \eqref{eqn:magneto-frictional-equations}, with novel structure-preserving properties \eqref{eq:sp-properties} that are essential for numerical investigations into the Parker conjecture.
Through the conservation of helicity \eqref{eq:helicity-conservation}, the scheme preserves a discrete version \eqref{eqn:discrete-arnold-inequality} of the topological barrier provided by the Arnold inequality \eqref{eqn:arnold-inequality}, preventing the decay to spurious trivial solutions.
Extending the helicity and the Arnold inequality to certain topologically nontrivial domains, we see our scheme further retains these structures.
Numerical results confirm these structure-preserving properties.

%but standard discretization still provides reasonable results. Whereas here, the improvement is qualitative—without helicity preservation, the solutions collapse to unphysical trivial states.

The proposed method offers a promising tool for the numerical investigation of the Parker conjecture without the use of Lagrangian formulations, as well as related questions, including the formation of finite-time singularities during magnetic relaxation.
In future work, we aim to develop numerical tools to distinguish whether the steady state exhibits tangential discontinuities as suggested in the Parker conjecture. We also intend to investigate efficient preconditioners to ease scaling to larger problems. We aim further to employ adaptive timestep strategies to accelerate the magnetic relaxation process, allowing faster convergence to steady states while still maintaining the Arnold inequality's topological barrier.

Beyond solving the magneto-frictional system itself, our work serves as a striking example of the importance of structure preservation. While for many problems, such methods are known to improve the accuracy of standard discretizations, the benefit for magnetic relaxation is both quantitative and qualitative: helicity preservation is essential to ensure that numerical solutions cannot collapse to unphysical trivial states.

\section{Code availability}
The simulations in \Cref{sec:Numerical-experiment} were implemented in {Firedrake} \cite{FiredrakeUserManual} and {PETSc} \cite{petsc-user-ref};
MUMPS~\cite{amestoy2001} was used to solve the linear systems.
The code used to generate the numerical results and all {Firedrake} components have been archived on Zenodo~\cite{hehelicity2025,zenodoboris}.

\section*{Acknowledgments}
%We acknowledge many helpful discussions with 
We would like to thank
Gunnar Hornig, Yang Liu, Pablo Brubeck Martinez, Zhongmin Qian, and Chris Smiet for helpful discussions.
For the purpose of open access, the authors have applied a CC BY public copyright licence to any author accepted manuscript (AAM) arising from this submission. No new data were generated or analyzed during this work.

\appendix
\begin{comment}
\section{Proof of the Arnold inequality}\label{sec:proof-arnold-ineq}
\begin{theorem}[Arnold inequality \cite{arnold1974asymptotic}]
For a divergence-free field $\bm B$ over a cavity-free (Betti number $\beta_2 = 0$) domain $\Omega$, satisfying $\bm B \cdot \bm n = 0$ on the boundary $\partial \Omega$, we have
\begin{equation}
    \left| \int_{\Omega} \bm A \cdot \bm B \, dx \right|
    \le C \int_{\Omega} \bm B\cdot \bm B \, dx.
\end{equation}
where $C$ is a positive constant dependent only on the domain $\Omega$, and $\bm A$ is any vector potential such that $\bm B = \curl \bm A$ and $\bm A\times \bm n = 0$ on the boundary $\partial\Omega$.
\end{theorem}
\begin{proof}
The result follows from the Cauchy--Schwarz and generalized Poincar\'e inequalities respectively:
\begin{equation}
    \left| \int_{\Omega} \bm A \cdot \bm B \, dx \right|
    \le  \|\bm A\|\, \|\bm B\|
    \le  C \|\bm B\|^2.
\end{equation}
\end{proof}
\end{comment}
\section{Proof of the Arnold inequality}\label{sec:proof-arnold-ineq}
\begin{theorem}[Arnold inequality \cite{arnold1974asymptotic}]
Let $\bm B$ be a divergence-free field over a topologically trivial domain $\Omega$, satisfying $\bm B \cdot \bm n = 0$ on $\partial \Omega$. Then there exists a vector potential $\bm A$ such that $\bm B = \curl \bm A$, $\bm A \times \bm n = 0$ on $\partial \Omega$, and
\begin{equation}
    \left| \int_{\Omega} \bm A \cdot \bm B \dx \right|
    \le C \int_{\Omega} \bm B \cdot\bm B \dx,
\end{equation}
where $C>0$ depends only on $\Omega$. Moreover, the quantity $\int_{\Omega} \bm A \cdot \bm B \dx$ is independent of the particular choice of $\bm A$.
\end{theorem}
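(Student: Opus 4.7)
The plan is to proceed in three stages: first establish existence of an admissible potential $\bm A$ with a convenient gauge, then prove the inequality for that choice, and finally verify gauge independence of the helicity integral so that the bound extends to any admissible $\bm A$.

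For existence, I would invoke exactness of the continuous de~Rham complex \eqref{eq:deRhamcomplex} at $H_0(\div)$, which holds on the contractible domain $\Omega$. Since $\bm B \in L^2(\Omega)$ satisfies $\div\bm B = 0$ and $\bm B \cdot \bm n = 0$ on $\partial\Omega$, it lies in $\curl H_0(\curl)$, so there exists $\bm A \in H_0(\curl)$ with $\curl\bm A = \bm B$ and $\bm A \times \bm n = \bm 0$ on $\partial\Omega$. To obtain an $L^2$ bound on $\bm A$ itself, I would fix the gauge by requiring additionally that $\div \bm A = 0$; this is achievable by subtracting $\grad \varphi$ for a suitable $\varphi \in H_0^1$, which exists by ellipticity of the Laplacian with Dirichlet data on $\Omega$.

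For the inequality, with this gauge choice I would combine the Cauchy--Schwarz inequality with the classical Friedrichs/Poincar\'e estimate $\|\bm A\| \le C \|\curl \bm A\|$, valid for $\bm A \in H_0(\curl)$ with $\div \bm A = 0$ on a contractible Lipschitz domain (this is the step where the absence of harmonic 1-forms is essential, so that the $\curl$ operator is injective on the divergence-free subspace of $H_0(\curl)$). Chaining these yields
\begin{equation}
    \left| \int_\Omega \bm A \cdot \bm B \dx \right|
    \le \|\bm A\|\,\|\bm B\|
    \le C\,\|\curl \bm A\|\,\|\bm B\|
    = C\,\|\bm B\|^2,
\end{equation}
which is the desired bound. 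For gauge independence, I would take any two admissible potentials $\bm A_1, \bm A_2 \in H_0(\curl)$ with $\curl \bm A_i = \bm B$; their difference is curl-free and lies in $H_0(\curl)$, so by exactness of \eqref{eq:deRhamcomplex} at $H_0(\curl)$ (again using topological triviality) there is $\varphi \in H_0^1$ with $\bm A_1 - \bm A_2 = \grad\varphi$. Integration by parts gives $(\grad\varphi, \bm B) = -(\varphi, \div \bm B) = 0$, with the boundary term vanishing because $\varphi \in H_0^1$. Hence $\int_\Omega \bm A \cdot \bm B \dx$ is the same for every admissible $\bm A$, and the inequality proved for the divergence-free gauge automatically transfers to all gauges.

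The main obstacle is the Poincar\'e-type estimate $\|\bm A\| \le C \|\curl \bm A\|$ for divergence-free $\bm A \in H_0(\curl)$: routine on smooth domains but requiring some care on general Lipschitz domains, where one appeals to the compact embedding $H_0(\curl) \cap H(\div) \hookrightarrow L^2(\Omega)$ and the absence of nontrivial harmonic fields satisfying both boundary conditions. Everything else is a direct consequence of Hodge-theoretic facts already implicit in the paper's framework.
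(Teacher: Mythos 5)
Your proposal is correct and follows essentially the same route as the paper's proof: fix a gauge in which $\bm A$ satisfies the generalized Poincar\'e inequality $\|\bm A\| \le C\|\curl\bm A\|$ (your divergence-free gauge is equivalent to the paper's choice of $\bm A$ being $L^2$-orthogonal to $\ker(\curl)$ in $H_0(\curl)$ on a contractible domain), then apply Cauchy--Schwarz, and establish gauge independence by noting that the difference of two admissible potentials is a gradient of an $H_0^1$ function and hence $L^2$-orthogonal to the divergence-free field $\bm B$. Your write-up merely adds detail (existence via exactness at $H_0(\div)$, the compactness argument behind the Poincar\'e estimate) beyond what the paper records.
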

\begin{proof}
For a topologically trivial domain, one can choose $\bm A$ to be $L^2$-orthogonal to the kernel of the curl operator, ensuring that $\bm A$ satisfies a generalized Poincaré inequality
\begin{equation}
 \|\bm A\| \le C \|\curl \bm A\| = C \|\bm B\|.   
\end{equation}
Applying the Cauchy--Schwarz inequality then gives
\begin{equation}
 \left| \int_{\Omega} \bm A \cdot \bm B \dx \right|
    \le \|\bm A\|\, \|\bm B\|
    \le C \|\bm B\|^2.    
\end{equation}
If another potential $\bm A'$ satisfies $\curl \bm A' = \bm B$, then $\bm A' - \bm A$ lies in $\ker(\curl)$ and is $L^2$-orthogonal to $\bm B$, so the integral $\int_\Omega \bm A\cdot \bm B \dx$ is independent of the choice of $\bm A$.
\end{proof}

\bibliographystyle{siamplain}
\bibliography{ref}

@article{amestoy2001,
  author  = {P. R. Amestoy and I. S. Duff and J. Koster and J.-Y. L'Excellent},
  title   = {A fully asynchronous multifrontal solver using distributed dynamic scheduling},
  journal = {SIAM Journal on Matrix Analysis and Applications},
  volume  = {23},
  number  = {1},
  year    = {2001},
  pages   = {15--41}
}

@article{andrews2024enforcing,
  title   = {Enforcing conservation laws and dissipation inequalities numerically via auxiliary variables},
  author  = {Andrews, Boris D and Farrell, Patrick E},
  journal = {arXiv preprint arXiv:2407.11904},
  year    = {2024}
}

@misc{AndrewsHighorderconservativeaccurately2024,
  title         = {High-Order Conservative and Accurately Dissipative Numerical Integrators via Auxiliary Variables},
  author        = {Andrews, Boris D. and Farrell, Patrick E.},
  year          = {2024},
  number        = {arXiv:2407.11904},
  eprint        = {2407.11904},
  primaryclass  = {cs, math},
  publisher     = {arXiv},
  urldate       = {2024-09-03},
  archiveprefix = {arXiv},
  keywords      = {65M60 (Primary) 65L05 65P10 (Secondary),Mathematics - Numerical Analysis}
}

@misc{archer2,
  doi       = {10.5281/ZENODO.14507040},
  url       = {https://zenodo.org/doi/10.5281/zenodo.14507040},
  author    = {Beckett,  George and Beech-Brandt,  Josephine and Leach,  Kieran and Payne,  Z\"{o}e and Simpson,  Alan and Smith,  Lorna and Turner,  Andy and Whiting,  Anne},
  keywords  = {HPC},
  language  = {en},
  title     = {{ARCHER2} Service Description},
  publisher = {Zenodo},
  year      = {2024},
  copyright = {Creative Commons Attribution 4.0 International}
}

@article{arnold1974asymptotic,
  title     = {The asymptotic {H}opf invariant and its applications},
  author    = {Arnold, Vladimir I},
  journal   = {Vladimir I. Arnold-Collected Works: Hydrodynamics, Bifurcation Theory, and Algebraic Geometry 1965-1972},
  pages     = {357--375},
  year      = {1974},
  publisher = {Springer}
}

@article{arnoldFiniteElementExterior2006,
  title   = {Finite Element Exterior Calculus, Homological Techniques, and Applications},
  author  = {Arnold, Douglas N. and Falk, Richard S. and Winther, Ragnar},
  year    = {2006},
  journal = {Acta Numerica},
  volume  = {15},
  pages   = {1--155},
  issn    = {0962-4929, 1474-0508},
  doi     = {10.1017/S0962492906210018},
  urldate = {2023-11-09},
  langid  = {english}
}

@article{arnoldFiniteElementExterior2010,
  title      = {Finite Element Exterior Calculus: from {{Hodge}} Theory to Numerical Stability},
  shorttitle = {Finite Element Exterior Calculus},
  author     = {Arnold, Douglas and Falk, Richard and Winther, Ragnar},
  year       = {2010},
  journal    = {Bulletin of the American Mathematical Society},
  volume     = {47},
  number     = {2},
  pages      = {281--354},
  issn       = {0273-0979, 1088-9485},
  doi        = {10.1090/S0273-0979-10-01278-4},
  urldate    = {2023-11-09},
  langid     = {english}
}

@book{ArnoldFiniteElementExterior2018,
  title     = {Finite {{Element Exterior Calculus}}},
  author    = {Arnold, Douglas N.},
  year      = {2018},
  publisher = {{Society for Industrial and Applied Mathematics}},
  address   = {Philadelphia, PA},
  doi       = {10.1137/1.9781611975543},
  urldate   = {2024-01-06},
  isbn      = {978-1-61197-553-6 978-1-61197-554-3},
  langid    = {english}
}

@book{ArnoldTopologicalMethodsHydrodynamics2021,
  title     = {Topological {{Methods}} in {{Hydrodynamics}}},
  author    = {Arnold, Vladimir I. and Khesin, Boris A.},
  year      = {2021},
  series    = {Applied {{Mathematical Sciences}}},
  volume    = {125},
  publisher = {Springer International Publishing},
  address   = {Cham},
  doi       = {10.1007/978-3-030-74278-2},
  urldate   = {2024-01-06},
  isbn      = {978-3-030-74277-5 978-3-030-74278-2},
  langid    = {english}
}

@article{beekie2022moffatt,
  title     = {On {M}offatt's magnetic relaxation equations},
  author    = {Beekie, Rajendra and Friedlander, Susan and Vicol, Vlad},
  journal   = {Communications in Mathematical Physics},
  volume    = {390},
  number    = {3},
  pages     = {1311--1339},
  year      = {2022},
  publisher = {Springer}
}

@techreport{bevir1980relaxation,
  title       = {Relaxation, flux consumption and quasi steady state pinches},
  author      = {Bevir, MK and Gray, JW},
  year        = {1980},
  institution = {Los Alamos National Laboratory},
  number      = {LA-8944-C}
}

@article{brenierTopologyPreservingDiffusionDivergenceFree2014,
  title     = {Topology-{{Preserving Diffusion}} of {{Divergence-Free Vector Fields}} and {{Magnetic Relaxation}}},
  author    = {Brenier, Yann},
  year      = {2014},
  journal   = {Communications in Mathematical Physics},
  volume    = {330},
  number    = {2},
  pages     = {757--770},
  issn      = {0010-3616, 1432-0916},
  doi       = {10.1007/s00220-014-1967-3},
  urldate   = {2024-05-22},
  copyright = {http://www.springer.com/tdm},
  langid    = {english}
}

@article{brezzi1985two,
  title     = {Two families of mixed finite elements for second order elliptic problems},
  author    = {Brezzi, Franco and Douglas, Jim and Marini, L. Donatella},
  journal   = {Numerische Mathematik},
  volume    = {47},
  pages     = {217--235},
  year      = {1985},
  publisher = {Springer}
}

@misc{candelaresiMimeticMethodsLagrangian2014a,
  title         = {Mimetic {{methods}} for {{Lagrangian relaxation}} of {{magnetic fields}}},
  author        = {Candelaresi, Simon and Pontin, David and Hornig, Gunnar},
  year          = {2014},
  number        = {arXiv:1405.0942},
  eprint        = {1405.0942},
  primaryclass  = {astro-ph, physics:physics},
  publisher     = {arXiv},
  urldate       = {2024-01-19},
  archiveprefix = {arXiv},
  langid        = {english},
  keywords      = {65D25 47F05 65M06 68W40 76W05 85-08 85A30,Astrophysics - Solar and Stellar Astrophysics,Physics - Computational Physics,Physics - Plasma Physics}
}

@article{Chodura3DcodeMHD1981,
  title     = {A {{3D}} Code for {{MHD}} Equilibrium and Stability},
  author    = {Chodura, R. and Schl{\"u}ter, A.},
  year      = {1981},
  journal   = {Journal of Computational Physics},
  volume    = {41},
  number    = {1},
  pages     = {68--88},
  issn      = {00219991},
  doi       = {10.1016/0021-9991(81)90080-2},
  urldate   = {2024-08-29},
  copyright = {https://www.elsevier.com/tdm/userlicense/1.0/},
  langid    = {english}
}

@article{craig1986dynamic,
  title   = {A dynamic relaxation technique for determining the structure and stability of coronal magnetic fields},
  author  = {Craig, I. J. D. and Sneyd, A. D.},
  journal = {Astrophysical Journal, Part 1 (ISSN 0004-637X), vol. 311, Dec. 1, 1986, p. 451-459.},
  volume  = {311},
  pages   = {451--459},
  year    = {1986}
}

@article{craig2005parker,
  title     = {The {Parker} problem and the theory of coronal heating},
  author    = {Craig, I. J. D. and Sneyd, A. D.},
  journal   = {Solar Physics},
  volume    = {232},
  pages     = {41--62},
  year      = {2005},
  publisher = {Springer}
}

@article{craig2014current,
  title     = {Current singularities in line-tied three-dimensional magnetic fields},
  author    = {Craig, I. J. D. and Pontin, DI},
  journal   = {The Astrophysical Journal},
  volume    = {788},
  number    = {2},
  pages     = {177},
  year      = {2014},
  publisher = {IOP Publishing}
}

@article{enciso2025obstructions,
  title     = {Obstructions to topological relaxation for generic magnetic fields},
  author    = {Enciso, Alberto and Peralta-Salas, Daniel},
  journal   = {Archive for Rational Mechanics and Analysis},
  volume    = {249},
  number    = {1},
  pages     = {1--28},
  year      = {2025},
  publisher = {Springer}
}

@manual{FiredrakeUserManual,
  title        = {Firedrake User Manual},
  author       = {David A. Ham and Paul H. J. Kelly and Lawrence Mitchell and Colin J. Cotter and Robert C. Kirby and Koki Sagiyama and Nacime Bouziani and Sophia Vorderwuelbecke and Thomas J. Gregory and Jack Betteridge and Daniel R. Shapero and Reuben W. Nixon-Hill and Connor J. Ward and Patrick E. Farrell and Pablo D. Brubeck and India Marsden and Thomas H. Gibson and Miklós Homolya and Tianjiao Sun and Andrew T. T. McRae and Fabio Luporini and Alastair Gregory and Michael Lange and Simon W. Funke and Florian Rathgeber and Gheorghe-Teodor Bercea and Graham R. Markall},
  organization = {Imperial College London and University of Oxford and Baylor University and University of Washington},
  edition      = {First edition},
  year         = {2023},
  month        = {5},
  doi          = {10.25561/104839}
}

@article{gawlikFiniteElementMethod2022,
  title   = {A Finite Element Method for {{MHD}} That Preserves Energy, Cross-Helicity, Magnetic Helicity, Incompressibility, and Div {{B}} = 0},
  author  = {Gawlik, Evan S. and {Gay-Balmaz}, Fran{\c c}ois},
  year    = {2022},
  journal = {Journal of Computational Physics},
  volume  = {450},
  pages   = {110847},
  issn    = {00219991},
  doi     = {10.1016/j.jcp.2021.110847},
  urldate = {2024-01-25},
  langid  = {english}
}

@article{GuzmanConformingdivergencefreeStokes2014,
  title   = {Conforming and Divergence-Free {{Stokes}} Elements in Three Dimensions},
  author  = {Guzm\'an, J. and Neilan, M.},
  year    = {2014},
  journal = {IMA Journal of Numerical Analysis},
  volume  = {34},
  number  = {4},
  pages   = {1489--1508},
  issn    = {0272-4979, 1464-3642},
  doi     = {10.1093/imanum/drt053},
  urldate = {2024-05-10},
  langid  = {english}
}

@article{hairer2006geometric,
  title   = {Geometric numerical integration},
  author  = {Hairer, Ernst and Hochbruck, Marlis and Iserles, Arieh and Lubich, Christian},
  journal = {Oberwolfach Reports},
  volume  = {3},
  number  = {1},
  pages   = {805--882},
  year    = {2006}
}

@misc{hornigUniversalMagneticHelicity2006,
  title         = {A {{Universal Magnetic Helicity Integral}}},
  author        = {Hornig, Gunnar},
  year          = {2006},
  number        = {arXiv:astro-ph/0606694},
  eprint        = {astro-ph/0606694},
  publisher     = {arXiv},
  urldate       = {2024-03-04},
  archiveprefix = {arXiv},
  langid        = {english},
  keywords      = {Astrophysics}
}

@article{huHelicityconservativeFiniteElement2021,
  title   = {Helicity-Conservative Finite Element Discretization for Incompressible {{MHD}} Systems},
  author  = {Hu, Kaibo and Lee, Young-Ju and Xu, Jinchao},
  year    = {2021},
  journal = {Journal of Computational Physics},
  volume  = {436},
  pages   = {110284},
  issn    = {00219991},
  doi     = {10.1016/j.jcp.2021.110284},
  urldate = {2023-12-24},
  langid  = {english}
}

@article{huStableFiniteElement2017,
  title     = {Stable finite element methods preserving {$\nabla\cdot B=0$} exactly for MHD models},
  author    = {Hu, Kaibo and Ma, Yicong and Xu, Jinchao},
  journal   = {Numerische Mathematik},
  volume    = {135},
  number    = {2},
  pages     = {371--396},
  year      = {2017},
  publisher = {Springer}
}

@article{huStructurepreservingFiniteElement2018,
  title   = {Structure-Preserving Finite Element Methods for Stationary {{MHD}} Models},
  author  = {Hu, Kaibo and Xu, Jinchao},
  year    = {2018},
  journal = {Mathematics of Computation},
  volume  = {88},
  number  = {316},
  pages   = {553--581},
  issn    = {0025-5718, 1088-6842},
  doi     = {10.1090/mcom/3341},
  urldate = {2023-11-07},
  langid  = {english}
}

@article{LaakmannStructurepreservinghelicityconservingfinite2023,
  title   = {Structure-Preserving and Helicity-Conserving Finite Element Approximations and Preconditioning for the {{Hall MHD}} Equations},
  author  = {Laakmann, Fabian and Hu, Kaibo and Farrell, Patrick E.},
  year    = {2023},
  journal = {Journal of Computational Physics},
  volume  = {492},
  pages   = {112410},
  issn    = {00219991},
  doi     = {10.1016/j.jcp.2023.112410},
  urldate = {2024-01-07},
  langid  = {english}
}

@article{liu2025obtaining,
  title={Obtaining Pseudoinverse Solutions with MINRES},
  author={Liu, Yang and Milzarek, Andre and Roosta, Fred},
  journal={SIAM Journal on Matrix Analysis and Applications},
  volume={46},
  number={3},
  pages={1887--1916},
  year={2025},
  publisher={SIAM}
}

@article{longbottom1998magnetic,
  title     = {Magnetic flux braiding: force-free equilibria and current sheets},
  author    = {Longbottom, AW and Rickard, GJ and Craig, I. J. D. and Sneyd, A. D.},
  journal   = {The Astrophysical Journal},
  volume    = {500},
  number    = {1},
  pages     = {471},
  year      = {1998},
  publisher = {IOP Publishing}
}

@article{mactaggart2019magnetic,
  title     = {Magnetic helicity in multiply connected domains},
  author    = {MacTaggart, David and Valli, Alberto},
  journal   = {Journal of Plasma Physics},
  volume    = {85},
  number    = {5},
  pages     = {775850501},
  year      = {2019},
  publisher = {Cambridge University Press}
}

@article{MaRobustpreconditionersincompressible2016,
  title   = {Robust Preconditioners for Incompressible {{MHD}} Models},
  author  = {Ma, Yicong and Hu, Kaibo and Hu, Xiaozhe and Xu, Jinchao},
  year    = {2016},
  journal = {Journal of Computational Physics},
  volume  = {316},
  pages   = {721--746},
  issn    = {00219991},
  doi     = {10.1016/j.jcp.2016.04.019},
  urldate = {2023-11-07},
  langid  = {english}
}

@article{moffatt1981some,
  title     = {Some developments in the theory of turbulence},
  author    = {Moffatt, HK},
  journal   = {Journal of Fluid Mechanics},
  volume    = {106},
  pages     = {27--47},
  year      = {1981},
  publisher = {Cambridge University Press}
}

@article{nedelec1-0,
  author  = {N\'ed\'elec, Jean-Claude},
  title   = {Mixed finite elements in \(\mathbb{R}^3\)},
  journal = {Numerische Mathematik},
  volume  = {35},
  number  = {3},
  year    = {1980},
  doi     = {10.1007/BF01396415},
  pages   = {{315--341}}
}

@article{parkerTopologicalDissipationSmallScale1972,
  title   = {Topological {{dissipation}} and the {{small-scale fields}} in {{turbulent gases}}},
  author  = {Parker, E. N.},
  year    = {1972},
  journal = {The Astrophysical Journal},
  volume  = {174},
  pages   = {499},
  issn    = {0004-637X, 1538-4357},
  doi     = {10.1086/151512},
  urldate = {2024-05-30},
  langid  = {english}
}

@techreport{petsc-user-ref,
  author      = {Satish Balay and Shrirang Abhyankar and Mark~F. Adams and Steven Benson and Jed
                 Brown and Peter Brune and Kris Buschelman and Emil Constantinescu and Lisandro
                 Dalcin and Alp Dener and Victor Eijkhout and Jacob Faibussowitsch and William~D.
                 Gropp and V\'{a}clav Hapla and Tobin Isaac and Pierre Jolivet and Dmitry Karpeev
                 and Dinesh Kaushik and Matthew~G. Knepley and Fande Kong and Scott Kruger and
                 Dave~A. May and Lois Curfman McInnes and Richard Tran Mills and Lawrence Mitchell
                 and Todd Munson and Jose~E. Roman and Karl Rupp and Patrick Sanan and Jason Sarich
                 and Barry~F. Smith and Hansol Suh and Stefano Zampini and Hong Zhang and Hong Zhang
                 and Junchao Zhang},
  title       = {{PETSc/TAO} Users Manual},
  institution = {Argonne National Laboratory},
  number      = {ANL-21/39 - Revision 3.22},
  doi         = {10.2172/2205494},
  year        = {2024}
}

@article{pontinParkerProblemExistence2020,
  title      = {The {{Parker}} Problem: existence of Smooth Force-Free Fields and Coronal Heating},
  shorttitle = {The {{Parker}} Problem},
  author     = {Pontin, David I. and Hornig, Gunnar},
  year       = {2020},
  journal    = {Living Reviews in Solar Physics},
  volume     = {17},
  number     = {1},
  pages      = {5},
  issn       = {2367-3648, 1614-4961},
  doi        = {10.1007/s41116-020-00026-5},
  urldate    = {2024-01-09},
  langid     = {english}
}

@inproceedings{raviart2006mixed,
  title        = {A mixed finite element method for 2-nd order elliptic problems},
  author       = {Raviart, Pierre-Arnaud and Thomas, Jean-Marie},
  booktitle    = {Mathematical Aspects of Finite Element Methods: Proceedings of the Conference Held in Rome, December 10--12, 1975},
  pages        = {292--315},
  year         = {2006},
  organization = {Springer}
}

@article{schotzau2004mixed,
  title     = {Mixed finite element methods for stationary incompressible magneto--hydrodynamics},
  author    = {Sch{\"o}tzau, Dominik},
  journal   = {Numerische Mathematik},
  volume    = {96},
  number    = {4},
  pages     = {771--800},
  year      = {2004},
  publisher = {Springer}
}

@article{smietIdealRelaxationHopf2017,
  title         = {Ideal {{relaxation}} of the {{Hopf fibration}}},
  author        = {Smiet, Christopher Berg and Candelaresi, Simon and Bouwmeester, Dirk},
  year          = {2017},
  journal       = {Physics of Plasmas},
  volume        = {24},
  number        = {7},
  eprint        = {1610.04719},
  primaryclass  = {physics},
  pages         = {072110},
  issn          = {1070-664X, 1089-7674},
  doi           = {10.1063/1.4990076},
  urldate       = {2024-03-30},
  archiveprefix = {arXiv},
  langid        = {english},
  keywords      = {Physics - Plasma Physics}
}

@article{wilmot2009magneticparallel,
  title     = {Magnetic braiding and parallel electric fields},
  author    = {Wilmot-Smith, AL and Hornig, G and Pontin, DI},
  journal   = {The Astrophysical Journal},
  volume    = {696},
  number    = {2},
  pages     = {1339},
  year      = {2009},
  publisher = {IOP Publishing}
}

@article{wilmot2009magneticquasi,
  title     = {Magnetic braiding and quasi-separatrix layers},
  author    = {Wilmot-Smith, AL and Hornig, G and Pontin, DI},
  journal   = {The Astrophysical Journal},
  volume    = {704},
  number    = {2},
  pages     = {1288},
  year      = {2009},
  publisher = {IOP Publishing}
}

@article{yeates2022limitations,
  title     = {On the limitations of magneto-frictional relaxation},
  author    = {Yeates, AR},
  journal   = {Geophysical \& Astrophysical Fluid Dynamics},
  volume    = {116},
  number    = {4},
  pages     = {305--320},
  year      = {2022},
  publisher = {Taylor \& Francis}
}

@misc{zenodoboris,
  author = {Andrews, B. D. and Farrell, P. E.},
  title  = {Software used in `Enforcing conservation laws and dissipation inequalities numerically via auxiliary variables'},
  year   = {2025},
  doi    = {10.5281/zenodo.15302724}
}

@article{zhou2014variational,
  title     = {Variational integration for ideal magnetohydrodynamics with built-in advection equations},
  author    = {Zhou, Yao and Qin, Hong and Burby, Joshua W and Bhattacharjee, Amitava},
  journal   = {Physics of Plasmas},
  volume    = {21},
  number    = {10},
  year      = {2014},
  publisher = {AIP Publishing}
}

@article{zhou2016formation,
  title     = {Formation of current singularity in a topologically constrained plasma},
  author    = {Zhou, Yao and Huang, Yi-Min and Qin, Hong and Bhattacharjee, A},
  journal   = {Physical Review E},
  volume    = {93},
  number    = {2},
  pages     = {023205},
  year      = {2016},
  publisher = {APS}
}

@article{zhou2017constructing,
  title     = {Constructing current singularity in a {3D} line-tied plasma},
  author    = {Zhou, Yao and Huang, Yi-Min and Qin, Hong and Bhattacharjee, Amitava},
  journal   = {The Astrophysical Journal},
  volume    = {852},
  number    = {1},
  pages     = {3},
  year      = {2017},
  publisher = {IOP Publishing}
}

@article{zweibel1987formation,
  title   = {The formation of current sheets in the solar atmosphere},
  author  = {Zweibel, Ellen G and Li, He-Sheng},
  journal = {The Astrophysical Journal},
  volume  = {312},
  pages   = {423--430},
  year    = {1987}
}

@article{pontin2009lagrangian,
  title={Lagrangian relaxation schemes for calculating force-free magnetic fields, and their limitations},
  author={Pontin, DI and Hornig, G and Wilmot-Smith, AL and Craig, Ian JD},
  journal={The Astrophysical Journal},
  volume={700},
  number={2},
  pages={1449},
  year={2009},
  publisher={IOP Publishing}
}

@misc{hehelicity2025,
 key   = {zenodo/Zenodo-20250811.3},
 title = {{Software used in `Helicity-preserving finite element discretization for magnetic rel
axation'}},
 year  = {2025},
 month = {aug},
 doi   = {10.5281/zenodo.16797562},
 url   = {https://doi.org/10.5281/zenodo.16797562},
}
\end{document}